\newtheorem{remark}{Remark}
\newtheorem{theorem}{Theorem}
\newtheorem{definition}{Definition}
\newtheorem{proposition}{Proposition}
\newtheorem{lemma}{Lemma}
\newtheorem{proof}{Proof}
\newcommand{\eg}{{\it e.g.}}
\newcommand{\ie}{{\it i.e.}}
\newcommand{\reals}{{\mathbb{R} }}
\newcommand{\tr}{{T }}
\newcommand{\dom}{\operatorname{ dom}}
\newcommand{\epi}{\operatorname{ epi}}
\newcommand{\conv}{\operatorname{ conv}}
\newcommand{\argmin}{\operatorname{ argmin}}
\newcommand{\prox}{\operatorname{\bf prox}}
\newcommand{\refl}{\operatorname{\bf refl}}
\newcommand{\argmax}{\operatorname{ argmax}}
\newcommand{\LeftEqNo}{\let\veqno\@@leqno}
\begin{document}
%
\title{Communication reduction in distributed optimization via estimation of the proximal operator}
%
%
%

\author{Giorgos~Stathopoulos,~\IEEEmembership{Member,~IEEE,}
        and~Colin~N.~Jones,~\IEEEmembership{Member,~IEEE.}
\thanks{The authors are with the Laboratoire d'Automatique, \'Ecole Polytechnique F\'ed\'erale de Lausanne (EPFL), Lausanne, Switzerland. E-mail: \{georgios.stathopoulos,colin.jones\}@epfl.ch .}
\thanks{The research leading to these results has received funding from the European Research Council under the European Union's Seventh Framework Programme (FP/2007-2013) / ERC Grant Agreement n. 307608: BuildNet.}
\thanks{This project has received funding from the European Research Council (ERC) under the European Union’s Horizon 2020 research and innovation programme (grant agreement No 755445)}
}

\markboth{Journal of \LaTeX\ Class Files,~Vol.~14, No.~8, August~2015}%
{Stathopoulos \MakeLowercase{\textit{et al.}}: Title}

\maketitle

\begin{abstract}
We introduce a reduced-communication distributed optimization scheme based on estimating
the solution to a proximal minimization problem. Our proposed setup involves a group of
agents coordinated by a central entity, altogether operating in a collaborative framework.
The agents solve proximal minimization problems that are hidden from the central
coordinator. The scheme enables the coordinator to construct a convex set within which the agents'
optimizers reside, and to iteratively refine the set every time that an agent is queried.
We analyze the quality of the constructed sets by showing their connections to the
$\epsilon$-subdifferential of a convex function and characterize their size. We prove convergence
results related to the solution of such distributed optimization problems and we devise a
communication criterion that embeds the proposed scheme in the Alternating Direction Method of
Multipliers (ADMM). The developed scheme demonstrates significant communication reduction when applied
to a microgrid setting.
\end{abstract}

\begin{IEEEkeywords}
Distributed optimization, Network optimization, ADMM, proximal operator, Moreau envelope, $\epsilon$-subdifferential
\end{IEEEkeywords}
\IEEEpeerreviewmaketitle

\section{Introduction}{\label{sec::motivation}}
The presence of a population of independent agents with a global coordinator, along with the fact that certain data is supposed to remain private to the agents, call for distributed optimization schemes. Methods like the Proximal Gradient Method (PGM), the Alternating Direction Method of Multipliers (ADMM) and several others are suitable for these problems (see~\cite{admm_distr_stats} and the references therein).

In the majority of these schemes, the local subproblems that need to be solved are cast as \emph{proximal minimization problems}, a term used to describe optimization problems that are regularized by the addition of a properly scaled quadratic term in their objective~\cite{parikh2014proximal}. In the course of the execution, the agents need to communicate the solutions to the proximal minimization subproblems to the coordinator, who will, in turn, manipulate the agents' objectives by broadcasting incentives that skew their local policies toward the global target.

In such multi-agent frameworks, extensive communication might be undesirable for a variety of reasons. Such reasons involve the existence of delays due to a weak network, the fact that the agents might run on energy-limited resources which are drained rapidly with frequent activations, or speeding up computation by skipping insignificant agent updates. It would, therefore, be useful if the coordinator could `guess' the optimizers of its agents and base the selection of the agent to update on the satisfaction of some criterion.

This work is an extension of~\cite{learnProx_ECC}, where a reduced communication framework for distributed optimization problems has been proposed. This is achieved by estimating a convex set containing the solution of a proximal minimization problem. Construction of the set is based on the theory of \emph{the Moreau envelope function} and its important connections with the proximal operator. The structural properties of the Moreau envelope allow the coordinator to bound the set of all possible optimizers to a convex set, explicitly described as the intersection of ellipsoids and iteratively refined every time that a communication round occurs. Subsequently, the coordinator can make a guess regarding the solution of the agent's optimizer by choosing a value from the constructed set and spare a communication round provided that the guess is adequately good.

In this work we further analyze the properties of the constructed ellipsoidal sets that contain the unknown solution of the proximal problem. We show that \emph{the sets can be recovered as limiting cases of $\epsilon$-subdifferential sets associated to functions that upper bound the Moreau envelope function} and that \emph{they are optimal with respect to the tightest upper bound of the (unknown) Moreau envelope.} We model the effect incurred by the lack of communication as an error in the solution of the optimization problem. By analyzing this result in the context of fixed-point iterations with errors, \emph{we prove asymptotic convergence of the sequence to an optimizer.} In addition, \emph{we devise a communication test for ADMM that enables adoption of the proposed framework}, and that significantly extends the applicability of the approach in comparison to~\cite{learnProx_ECC}, where only the projected gradient method was considered.

The manuscript is organized as follows: The related work is cited in Section~\ref{sec::related}, while the notation is explained in Section~\ref{sec::notation}. In Section~\ref{sec::problem} we define the problem we are interested in solving and give an overview of the proposed solution approach. In Section~\ref{sec::theory} we provide a step-by-step construction of the set where the solution lies. We show how it is related to the $\epsilon$-subdifferential set of a quadratic upper bound to the envelope function and prove that it is optimal with respect to some metric. We also propose an approximation of the set that can be used for computations. In Section~\ref{sec::communication} we couple the proposed scheme to ADMM and devise a certification test that decides when the gradient set should be updated, namely when a communication round should be triggered. Subsequently, convergence results are proven in Section~\ref{sec::convergence}. Finally, in Section~\ref{sec::application} we provide evidence about the performance of the proposed scheme by solving a load sharing problem for microgrids.

\section{Related work }{\label{sec::related}}
A comprehensive theory of the Moreau envelope function can be found in~\cite{RockWets98}, while the algorithmic aspects of the proximal iteration and its gradient descent interpretation are analyzed in~\cite{MoreYos}. In our proposed scheme, the convex set within which the possible optimizers reside can be essentially constructed by making use of a fundamental gradient inequality as will be shown in Section~\ref{sec::theory}. Interestingly, a similar line of thinking has been followed in~\cite{Taylor2017} in order to compute the exact worst-case performance of fixed-step first-order methods for unconstrained optimization of smooth convex functions. We are not aware of other works that seek to `learn' the solutions to proximal minimization problems, or, in fact, to any other optimization problem. In the context of communication avoidance in distributed optimization, the work~\cite{ComRedProx} proposes a scheme for reducing communication rounds for a stochastic version of the iterative soft thresholding algorithm. In a similar but rather more generic manner, the authors in~\cite{Ma:2017} propose a communication-efficient distributed-optimization framework for large-scale machine learning applications. 

\section{Notation}{\label{sec::notation}}
Consider an (extended-real-valued) closed proper convex function
$f:\reals^n\mapsto\reals\cup\{+\infty\}$. Adopting the definitions from~\cite{bertsekas03},
we denote the \emph{effective domain} of $f$ by $\dom\;(f)$, where
\[
\dom\;(f) = \left\{ x\in\reals^n\mid f(x)<+\infty\right\},
\]
and its \emph{epigraph} by $\epi\;(f)$. The epigraph is the subset of $\reals^{n+1}$ given by
\[
\epi\;(f) = \left\{ (x,s)\mid x\in\reals^n, s\in\reals, f(x) \leq s\right\}.
\]
The subdifferential of $f$ at a point $x$ is denoted by $\partial f(x)$.\\
The \emph{proximal operator} $\prox{_{\gamma f}}:\reals^n\mapsto\reals^n$ evaluated at
$z$ is denoted as $\prox{_{\gamma f}}(z)=\underset{x}{\argmin}\left\{f(x)+(1/(2\gamma))\|x-z\|^2\right\}$, $\gamma>0$.
The \emph{indicator function} of a closed convex set $\mathcal{Z}$ is denoted as
\[
\delta(z\mid\mathcal{Z}) = \left\{ \begin{array}{ll} 0 & z\in\mathcal{Z} \\
              \infty & \mbox{otherwise}.
              \end{array} \right.
\]
When $f=\delta(\cdot\mid\mathcal{Z})$, $\prox{_{\gamma f}}(z)=\mathcal{P}_{\mathcal{Z}}(z)$,
where $\mathcal{P}_{\mathcal{Z}}$ denotes the projection of $z$ onto the set $\mathcal{Z}$.\\

Another useful operator is the \emph{reflection operator}, denoted as $\refl{_{\gamma f}}:\reals^n\mapsto\reals^n$ and
defined as $\refl{_{\gamma f}}=2\prox{_{\gamma f}}-I$, where $I$ is the identity matrix.\\
Finally, we denote the \emph{convex conjugate} of $f$ by the function
\[
f^\star(\lambda) = \underset{x\in\reals^n}{\sup}\left\{\langle x,\lambda\rangle - f(x)\right\}, \; \lambda\in\reals^n.
\]

\section{Problem description}{\label{sec::problem}}

\subsection{The problem}
Consider an optimization problem involving $N$ agents and a coordinator that takes the form
\begin{equation}{\label{eq::distopt}}
\begin{aligned}
&{\text{minimize}} && h(x)+ \sum_{i=1}^Nf_i(x_i)\enspace,
\end{aligned}
\end{equation}
where $f_i:\reals^{n_i}\mapsto\reals\cup\{+\infty\},\;i=1,\ldots,N$ are arbitrary closed proper convex functions,
private to the corresponding agents, and $h:\reals^n\mapsto\reals\cup\{+\infty\}$, $n=\sum_{i=1}^Nn_i$, is the
convex objective of the coordinator.

In distributed optimization settings like the one above, the population of agents and the coordinator cooperate so as to solve problem~\eqref{eq::distopt}. In order to achieve this, it is often the case that a \emph{decomposition method} is applied, \eg, the Proximal Gradient Method, ADMM etc. Since certain agents' data is supposed to remain private in several settings, each agent iteratively solves a proximal minimization problem of the form 
\begin{equation}{\label{eq::prox_def_many}}
 \prox{_{\gamma f_i}}(z_i^k)=\underset{x_i\in\reals^{n_i}}{\argmin}\left\{f_i(x_i)+\frac{1}{2\gamma}\|x_i-z_i^k\|^2\right\}\enspace.
\end{equation}
During the execution of the employed decomposition algorithm, a sequence of points $\{z_i^k\}_{k\in\mathbb{N}}$ at which the proximal minimization problem is solved is generated by the coordinator. These points correspond to the coordinator's intervention in order to steer the agents toward the minimizer of the (global) objective $h$. The response vector $\prox{_{\gamma f_i}}(z_i^k)$ is then transmitted from the agent to the coordinator. Such iterative schemes can be seen as \emph{querying mechanisms}, where the coordinator broadcasts some value $\{z_i^k\}$ and the agent responds with another.

In setups where communication is expensive, the coordinator would like to avoid excessive querying of the agents, especially if the elicited information is not of importance. \emph{Our goal is to estimate, to the best possible accuracy, the optimizer
of~\eqref{eq::prox_def_many} for a given (arbitrary) sequence $\{z_i^k\}_{k\in\mathbb{N}}$, without having to solve
the optimization, namely we want to `guess' the solution of a (local) proximal minimization problem from the coordinator's perspective, \ie, without knowledge of the funtction $f_i$.}

\subsection{The approach}
Focusing on just one agent for the purpose of the analysis, we rewrite the proximal minimization problem as 
\begin{equation}{\label{eq::prox_def}}
 \prox{_{\gamma f}}(z)=\underset{x\in\reals^{n}}{\argmin}\left\{f(x)+\frac{1}{2\gamma}\|x-z\|^2\right\}\enspace.
\end{equation}
We will try to introduce some structure in~\eqref{eq::prox_def} so as to estimate it. 
The value function of \eqref{eq::prox_def} is also known as the Moreau envelope of $f$, defined as
 \begin{equation}{\label{eq::moreau}}
   f^\gamma(z) = \min_{x\in\reals^n} \left\{ f(x) + \frac{1}{2\gamma}\|x-z\|^2\right\}\enspace.
\end{equation}
When $f$ is a closed proper convex function, the Moreau envelope is \emph{convex and differentiable, with
Lipschitz continuous gradient with constant $1/\gamma$}.
Moreover, \emph{the set of minima of~$f$ and of $f^\gamma$ coincide}.
A discussion about the envelope and its properties can be found in~\cite[Section~5.1]{bertsekas_cvx_algos}.
It is also shown in~\cite[Proposition~5.1.7]{bertsekas_cvx_algos} that
\emph{the unique solution to the proximal minimization~$x^\gamma(z)=\prox_{\gamma f}(z)$ can be written as
   \begin{equation}{\label{eq::optimizer}}
     x^\gamma(z) = z - \gamma \nabla f^\gamma(z)\enspace,
  \end{equation}
  \ie,
it is the point at which the gradient iteration of $f^\gamma$, evaluated at $z$, lands}.

It is evident from~\eqref{eq::optimizer} that, given an arbitrary point $z$,
the gradient $\nabla f^\gamma(z)$
is all that is needed in order to reconstruct the optimizer of~\eqref{eq::prox_def}.
Therefore, the coordinator aims at constructing
\emph{a set of possible gradients at $z$ and a way to evaluate
the worst case gradient (how far away the estimate can be from the actual gradient) contained in the set}.

Our approach is to construct such a set and to continually refine it (shrink it)
every time that an exchange occurs between the coordinator and the agent, namely every
time that~\eqref{eq::prox_def} is solved for a point $z$. We are going to refer to points at which
the problem is actually solved, as \emph{query points}. The coordinator's goal is to estimate the solution of~\eqref{eq::prox_def} at a point of interest by using the solution at previous query points.


Assume the existence of a number of generated query points $z_j,\;j\in\mathcal{J}$.
In what follows, we proceed to construct a set that contains the gradient $\nabla f^\gamma(v)$ at the point of interest $v$
in six distinct steps described below.
\begin{enumerate}
\item We first devise a family of (local) nested sets that all contain $\nabla f^\gamma(v)$. Each family is centered around its corresponding query point $z_j$.
\item We show that the family of sets can be represented by a convex inequality.
\item We find the smallest set in the family for each query point $z_j$.
\item We devise a (global) set within which $\nabla f^\gamma(v)$ resides by taking the intersection of the smallest sets for all $z_i$'s.
\item We derive the relation between the global set and the tightest known function that upper bounds $f^\gamma(v)$.
\item We derive an approximation of the global set that can be used for numerical computations. 
\end{enumerate}

\section{Estimating the solution to a proximal minimization problem}{\label{sec::theory}}
\paragraph{Quadratic upper bound} Since we want to locate the unknown gradient $\nabla f^\gamma(v)$ in a set, what comes to mind is a notion of \emph{subdifferential}
of some function at $v$. The function we are about to use is a quadratic upper bound on the Moreau envelope,
implied by the Lipschitz gradient property of $f^\gamma$.
\begin{proposition}{\label{prop::upperQuadratic}}
The Moreau envelope $f^\gamma$ admits a quadratic upper bound in terms of the linear approximation of $f^\gamma$ based on the gradient
at any $v\in\reals^n$, \ie,
\begin{equation}{\label{eq::quadratic_bound}}
\overline{f}^\gamma(z;v) = f^\gamma(v) + \langle \nabla f^\gamma(v),z-v\rangle + \frac{1}{2\gamma}\|z-v\|_2^2, \quad z,v\in\reals^n\enspace,
\end{equation}
(see, \eg, \cite[Section~6.1]{bertsekas_cvx_algos}).
\end{proposition}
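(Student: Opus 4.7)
The plan is to obtain the quadratic upper bound as a direct consequence of two facts already established in the excerpt: that $f^\gamma$ is convex and differentiable, and that its gradient is Lipschitz continuous with constant $1/\gamma$. The standard name for the inequality to be derived is the \emph{descent lemma}, and once those two properties are on the table the argument is essentially one line of calculus.

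Concretely, I would start from the fundamental theorem of calculus applied to the smooth function $t\mapsto f^\gamma\bigl(v+t(z-v)\bigr)$ on $[0,1]$, which gives
\[
f^\gamma(z) - f^\gamma(v) \;=\; \int_0^1 \langle \nabla f^\gamma\bigl(v+t(z-v)\bigr),\, z-v\rangle\, dt.
\]
Next I would subtract the linear term $\langle \nabla f^\gamma(v), z-v\rangle$ from both sides, so that the integrand becomes $\langle \nabla f^\gamma(v+t(z-v)) - \nabla f^\gamma(v),\, z-v\rangle$. Applying the Cauchy--Schwarz inequality and the $(1/\gamma)$-Lipschitz continuity of $\nabla f^\gamma$ bounds this by $(t/\gamma)\|z-v\|^2$, and integrating in $t$ over $[0,1]$ yields the desired $\tfrac{1}{2\gamma}\|z-v\|^2$ remainder term. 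Rearranging gives exactly the claimed inequality $f^\gamma(z)\le \overline{f}^\gamma(z;v)$.

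There is no real obstacle here, since Proposition~\ref{prop::upperQuadratic} is just the descent lemma applied to $f^\gamma$ with Lipschitz constant $1/\gamma$; the only thing to verify is that $f^\gamma$ meets the hypotheses of that lemma, which the paper has already stated (convex, differentiable, and $\nabla f^\gamma$ is $(1/\gamma)$-Lipschitz) and cites from~\cite{bertsekas_cvx_algos}. For a self-contained write-up, I would either reproduce the short integral calculation above or simply cite \cite[Section~6.1]{bertsekas_cvx_algos} as the proposition itself does; given that the bound is invoked only as a stepping stone toward the construction of the gradient-containing set in the next steps, a one-sentence citation-based proof is probably appropriate for the paper, while the integral argument is the natural fallback if a reader wants to see why the Lipschitz constant $1/\gamma$ enters as the coefficient of the quadratic term.
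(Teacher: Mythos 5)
Your proposal is correct: the paper offers no proof of its own for this proposition, deferring entirely to the citation of \cite[Section~6.1]{bertsekas_cvx_algos}, and your integral/descent-lemma argument is exactly the standard derivation behind that cited result, using only the $(1/\gamma)$-Lipschitz continuity of $\nabla f^\gamma$ already stated in the paper. Nothing is missing; the one-line citation and the spelled-out calculation are interchangeable here.
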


The quadratic upper bounding function~\eqref{eq::quadratic_bound} will enable us to
compute the desired set.
Consider a query point, namely $z_1$. We will carry out the remaining analysis for the case that only one query point has been generated,
while the point of interest will be $z=v$. The point $z_1$ is associated to the quadratic
approximation $\overline{f}^\gamma(z;z_1)$ that upper bounds $f^\gamma$.  Looking at
Figure~\ref{subfig::epis:2}, we observe that the point $(v,f^\gamma(v))$ lies below the epigraph of
$\overline{f}^\gamma(z;z_1)$, while the blue lines \emph{are the tangent hyperplanes of 
$\mathrm{epi}\;(\overline{f}^\gamma(z;z_1))$ at $v$}. This brings up the notion of the $\epsilon$-subdifferential
set (\cite[Section~3.3]{bertsekas_cvx_algos}), the definition of which is given below:
\begin{definition}[$\epsilon$-subgradient]{\label{def::eps_subdiff}}
 Given a proper convex function $f:\reals^n\mapsto\reals\cup\{+\infty\}$ and a scalar $\epsilon>0$,
 we say that a vector $g$ is an \emph{$\epsilon$-subgradient} of $f$ at $x\in\dom(f)$ if
 \[
 f(z)\geq f(x) + \langle g, z-x\rangle - \epsilon,\quad \forall z\in\reals^n\enspace.
 \]
The \emph{$\epsilon$-subdifferential} is the set of all $\epsilon$-subgradients of $f$ at $x$, \ie,
\[
\partial_{\epsilon}f(x) := \left\{g\mid f(z)\geq f(x) + \langle g, z-x\rangle - \epsilon,\quad \forall z\in\reals^n\right\}\enspace.
\]
\end{definition}

\paragraph{A family of local sets} The $\epsilon$-subdifferential set provides us with a way to restrict the set within which the
actual gradient of the Moreau envelope at $v$, $\nabla f^\gamma(v)$, resides.

In order to do so, we introduce the following parameterization:
\begin{equation}{\label{eq::epsilon}}
\epsilon^\ast(v;z_1) = \overline{f}^\gamma(v;z_1)-f^\gamma(v)\enspace.
\end{equation}

Let us explain what equation~\eqref{eq::epsilon} represents with the help of Figure~\ref{subfig::epis:1}.
For any $\epsilon>0$ denoting the vertical distance from $\overline{f}^\gamma(v;z_1)$ at $v$,
we can draw the hyperplanes (grey lines) that
are tangent to the epigraph of $\overline{f}^\gamma(z;z_1)$ and generate an outer approximation of
$\mathrm{epi}\;(\overline{f}^\gamma(z;z_1))$. Equation~\eqref{eq::epsilon} defines \emph{the smallest
$\epsilon$-subdifferential set for which we can show that the gradient $\nabla f^\gamma(v)$ resides inside.}

\begin{proposition}{\label{prop::eps_subdiff_1}}
The following statements characterize the set in which $\nabla f^\gamma(v)$ is located.
\begin{enumerate}
\item
 The gradient of the Moreau envelope at a point $v$ is contained in
 the $\epsilon^\ast(v;z_1)$-subdifferential set of the upper-bounding quadratic function
 $\overline{f}^\gamma(z;z_1)$ evaluated at $v$ and constructed around the
 query point $z_1$, \ie,
 \[
  \nabla f^\gamma(v)\in\partial_{\epsilon^\ast(v;z_1)}\overline{f}^\gamma(v;z_1)\enspace.
 \]
 \item
 The $\epsilon^\ast(v;z_1)$-subdifferential set is nonempty, convex and compact.
 \end{enumerate}
\end{proposition}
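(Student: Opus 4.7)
The plan is to verify directly from Definition 1 that $\nabla f^\gamma(v)$ meets the $\epsilon^\ast(v;z_1)$-subgradient inequality for $\overline{f}^\gamma(\cdot;z_1)$ at $v$, and then to invoke standard topological properties of $\epsilon$-subdifferentials of finite-valued convex functions for the second claim.

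For part (1), I would start by writing out what $\nabla f^\gamma(v) \in \partial_{\epsilon^\ast(v;z_1)}\overline{f}^\gamma(v;z_1)$ means via Definition 1: namely, for every $z \in \reals^n$,
\begin{equation*}
\overline{f}^\gamma(z;z_1) \geq \overline{f}^\gamma(v;z_1) + \langle \nabla f^\gamma(v), z-v\rangle - \epsilon^\ast(v;z_1).
\end{equation*}
Substituting the definition $\epsilon^\ast(v;z_1) = \overline{f}^\gamma(v;z_1) - f^\gamma(v)$ from \eqref{eq::epsilon}, the inequality to establish collapses to
\begin{equation*}
\overline{f}^\gamma(z;z_1) \geq f^\gamma(v) + \langle \nabla f^\gamma(v), z-v\rangle, \quad \forall z\in\reals^n.
\end{equation*}
From here the chain is immediate: Proposition~\ref{prop::upperQuadratic} gives $\overline{f}^\gamma(z;z_1) \geq f^\gamma(z)$ pointwise, while convexity and differentiability of $f^\gamma$ yield the standard first-order lower bound $f^\gamma(z) \geq f^\gamma(v) + \langle \nabla f^\gamma(v), z-v\rangle$. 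Chaining the two inequalities closes the argument.

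For part (2), nonemptiness follows for free from part (1), since $\nabla f^\gamma(v)$ is a concrete element of the set. Convexity follows because $\partial_{\epsilon^\ast(v;z_1)}\overline{f}^\gamma(v;z_1)$ is, by inspection of Definition 1, the intersection over all $z \in \reals^n$ of the closed half-spaces $\{g \mid \langle g, z-v\rangle \leq \overline{f}^\gamma(z;z_1) - \overline{f}^\gamma(v;z_1) + \epsilon^\ast(v;z_1)\}$; an arbitrary intersection of closed convex sets is closed and convex. Compactness then reduces to boundedness, which is where a little care is needed.

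For boundedness I would use that $\overline{f}^\gamma(\cdot;z_1)$ is finite (in fact, a quadratic) on all of $\reals^n$, so $v$ is in the interior of its domain; it is a standard fact for proper convex functions that, at an interior point of the domain, the $\epsilon$-subdifferential is bounded (see, e.g., \cite[Section~3.3]{bertsekas_cvx_algos}). Concretely, one can argue by contradiction: if $\{g_k\}\subseteq \partial_{\epsilon^\ast(v;z_1)}\overline{f}^\gamma(v;z_1)$ had $\|g_k\|\to\infty$, then choosing $z = v + g_k/\|g_k\|$ in the $\epsilon$-subgradient inequality and using the explicit quadratic form of $\overline{f}^\gamma(\cdot;z_1)$ would force its value on the unit sphere around $v$ to grow without bound, contradicting the fact that a quadratic is bounded on a bounded set. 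The only mildly delicate step is this boundedness argument; part (1) and the convexity/closedness of the set are essentially definitional once Proposition~\ref{prop::upperQuadratic} is in hand.
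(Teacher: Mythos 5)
Your proof is correct and follows essentially the same route as the paper: part (1) rests on exactly the same two ingredients (the upper-bound property $\overline{f}^\gamma(z;z_1)\geq f^\gamma(z)$ and the first-order convexity inequality for $f^\gamma$), with the only difference being that you chain them directly while the paper phrases the argument as a contradiction. For part (2) the paper simply cites real-valuedness of the quadratic together with a standard result on $\epsilon$-subdifferentials, which is precisely the fact you spell out in more detail.
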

\begin{proof}
 The proof is deferred to Appendix~\ref{app::1}.
\end{proof}

\begin{figure*}
 \centering
  \begin{tabular}[c]{cc}
  \begin{subfigure}[c]{0.40\textwidth}
    \includegraphics[width=1.00\linewidth]{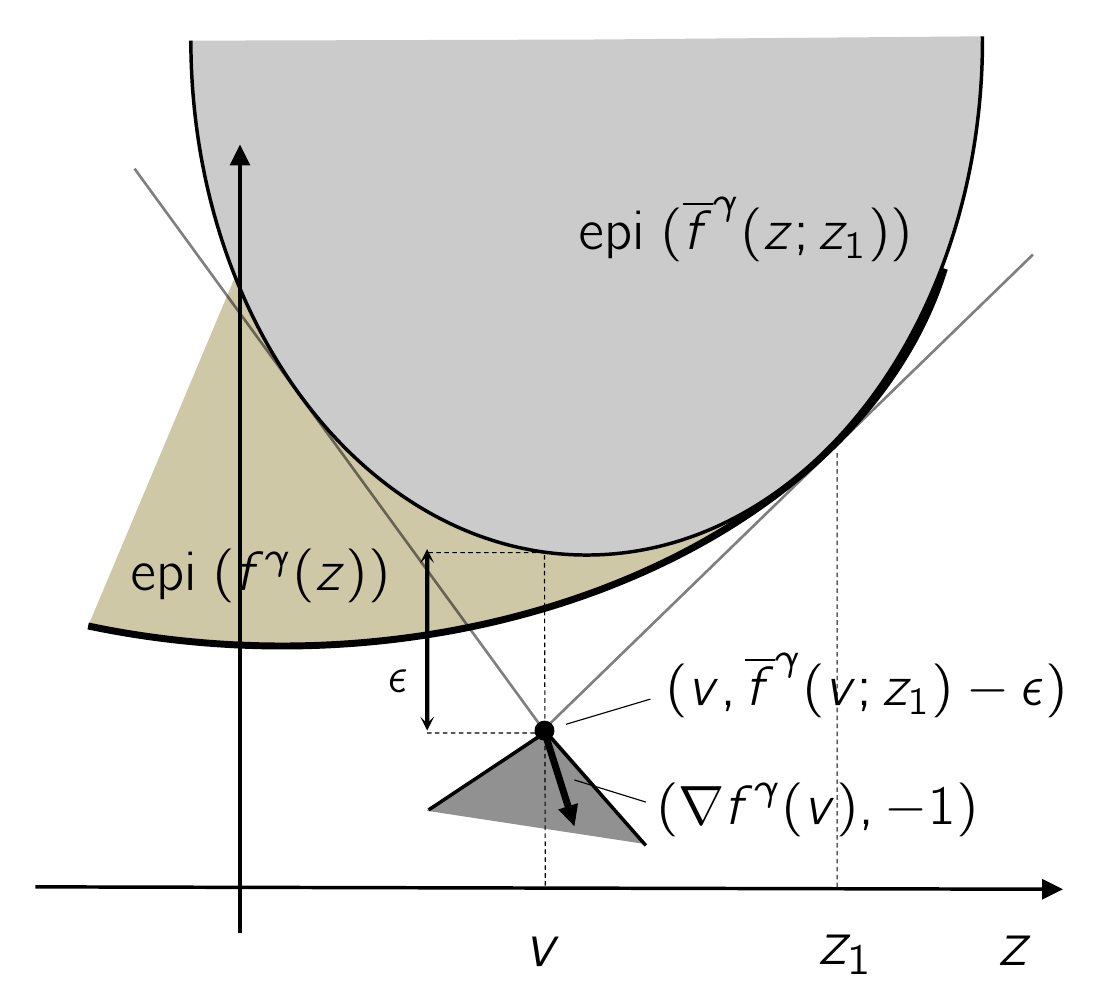}
    \caption{An $\epsilon$-subdifferential set represented as a cone.}
    \label{subfig::epis:1}
  \end{subfigure}&
  \begin{subfigure}[c]{0.40\textwidth}
    \includegraphics[width=1.00\linewidth]{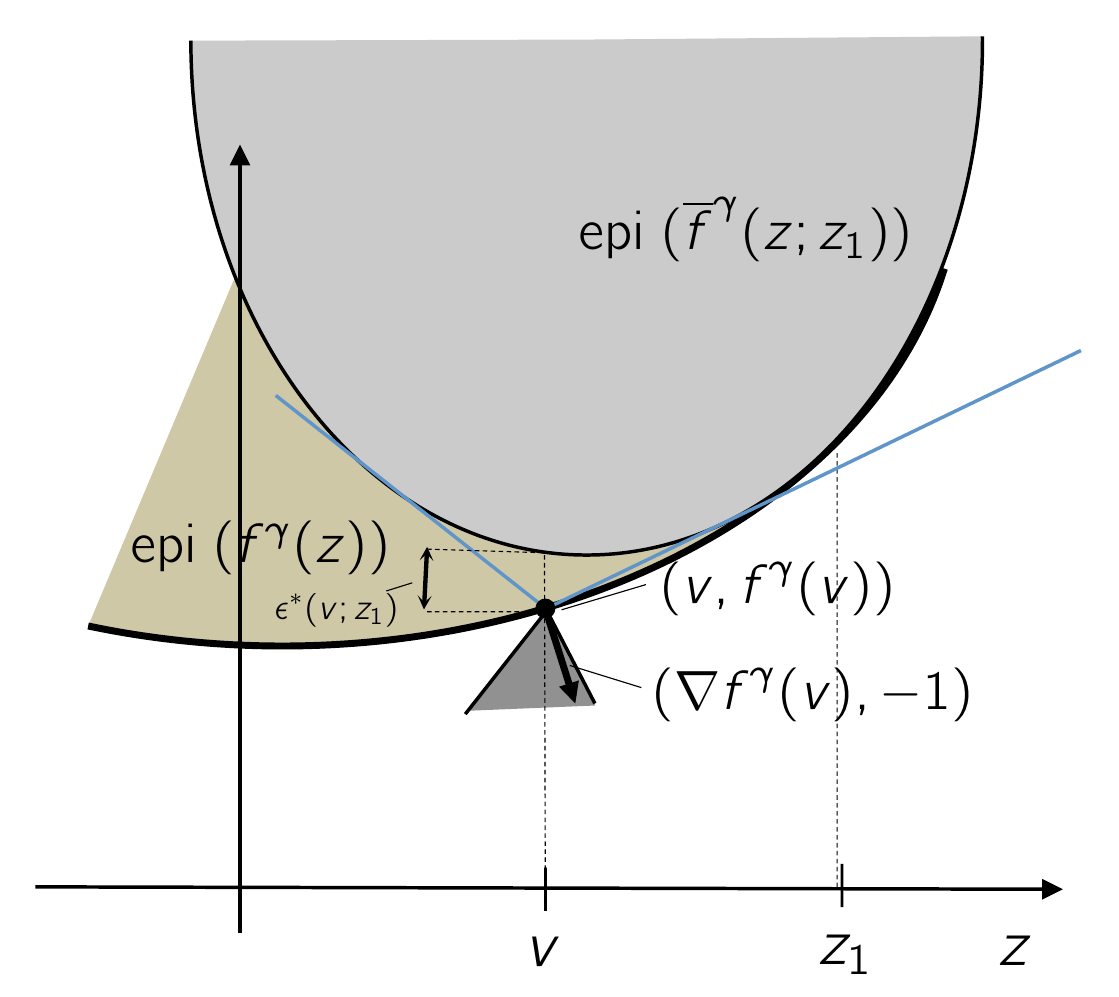}
    \caption{The smallest $\epsilon$-subdifferential set.}
    \label{subfig::epis:2}
  \end{subfigure}\\
\end{tabular}
\caption{The epigraph of the envelope function $f^\gamma(z)$ and that of the quadratic upper bounding function $\overline{f}^\gamma(z;z_1)$ are illustrated. For any point centered at $v$ that lies $\epsilon$ below the epigraph of $\overline{f}^\gamma(z;z_1)$, the $\epsilon$-subdifferential set is depicted as the normal cone of the set constructed by the tangent hyperplanes of $\mathrm{epi}\;(\overline{f}^\gamma(z;z_1))$, depicted with the grey lines, at $(v,\overline{f}^\gamma(v;z_1)-\epsilon)$. The actual gradient $\nabla f^\gamma(v)$ is contained in all $\partial_{\epsilon}\overline{f}^\gamma(v;z_1)$ for $\epsilon\geq \epsilon^\ast(v;z_1)$. As $\epsilon$ is getting smaller, the subdifferential sets shrink, and converge to $\partial_{\epsilon^\ast(v;z_1)}\overline{f}^\gamma(v;z_1)$, depicted by the normal cone to the set illustrated by the blue lines in the right figure.}
\label{fig::sketch1}
\end{figure*}


The proposition suggests the existence of a set that contains the
gradient $\nabla f^\gamma(v)$ , namely the set $\partial_{\epsilon^\ast(v;z_1)}\overline{f}^\gamma(v;z_1)$
which describes a family of admissible slopes.

\paragraph{Explicit representation of the local sets} We will, subsequently, construct $\partial_{\epsilon^\ast(v;z_1)}\overline{f}^\gamma(v;z_1)$
in an explicit way.
To this end, we introduce the function
\begin{equation*}{\label{eq::translate_func}}
\overline{f}_z^\gamma(d;z_1) = \overline{f}^\gamma(z+d;z_1) - \overline{f}^\gamma(z;z_1), \quad \forall d\in\reals^n
\end{equation*}
and its conjugate~\cite[Chapter~23]{Rockafellar70convexanalysis}
\begin{equation}{\label{eq::translate_func_conj}}
(\overline{f}_z^\gamma)^\star(g;z_1) = (\overline{f}^\gamma)^\star(g;z_1) + \overline{f}^\gamma(z;z_1) - \langle z,g \rangle\enspace.
\end{equation}
The following Proposition holds.
\begin{proposition}{\label{prop::subdiff_epi}}
The $\epsilon$-subdifferential set of $\overline{f}^\gamma(z;z_1)$, evaluated at $z=v$, is given by
\begin{equation}{\label{eq::translate_func_epi}}
\partial_{\epsilon}\overline{f}^\gamma(v;z_1) = \left\{ g\;\mid\;(\overline{f}_v^\gamma)^\star(g;z_1) \leq \epsilon\right\}\enspace,
\end{equation}
for any $\epsilon>0$.
\end{proposition}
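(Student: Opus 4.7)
The statement is a standard Fenchel-type characterization of the $\epsilon$-subdifferential via the convex conjugate of the ``recentred'' function $\overline{f}_v^\gamma(\cdot;z_1)$, so the plan is to unwind the definitions and then take a supremum. Concretely, I would start from Definition~\ref{def::eps_subdiff} applied to $\overline{f}^\gamma(\cdot;z_1)$ at the point $v$: a vector $g$ belongs to $\partial_{\epsilon}\overline{f}^\gamma(v;z_1)$ if and only if
\[
\overline{f}^\gamma(z;z_1) \geq \overline{f}^\gamma(v;z_1) + \langle g, z-v\rangle - \epsilon \quad \forall z\in\reals^n.
\]
Introducing the change of variable $d = z - v$ and rearranging, this becomes equivalent to the inequality
\[
\overline{f}^\gamma(v+d;z_1) - \overline{f}^\gamma(v;z_1) - \langle g, d\rangle \geq -\epsilon \quad \forall d\in\reals^n,
\]
which by the definition of the translated function $\overline{f}_v^\gamma(d;z_1)$ just says $\langle g,d\rangle - \overline{f}_v^\gamma(d;z_1) \leq \epsilon$ for every $d$.

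The second step is to take the supremum over $d\in\reals^n$ on the left-hand side, which is precisely the Legendre--Fenchel conjugate $(\overline{f}_v^\gamma)^\star(g;z_1)$. Since the bound must hold for all $d$ simultaneously, the supremum version is equivalent to the pointwise version, yielding
\[
g \in \partial_{\epsilon}\overline{f}^\gamma(v;z_1) \iff (\overline{f}_v^\gamma)^\star(g;z_1) \leq \epsilon,
\]
which is exactly \eqref{eq::translate_func_epi}. The conjugate is well defined and proper because $\overline{f}_v^\gamma(\cdot;z_1)$ inherits closedness, convexity and properness from the quadratic $\overline{f}^\gamma(\cdot;z_1)$ (indeed, $\overline{f}_v^\gamma(0;z_1)=0$ so the function is proper), so the supremum either is attained or is $+\infty$ in a consistent way.

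There is no real obstacle here: the only point that needs a little care is making sure the translation and the centering constant $\overline{f}^\gamma(v;z_1)$ are handled correctly, which is already encoded in the formula \eqref{eq::translate_func_conj} cited from Rockafellar, so I would simply verify that identity matches the sup I obtain and cite \cite[Chapter~23]{Rockafellar70convexanalysis} for the conjugate-of-translate formula. Thus the proof is essentially a two-line manipulation: definition of $\epsilon$-subgradient $\Rightarrow$ reparametrize by $d$ $\Rightarrow$ take $\sup_d$ to recognize the conjugate.
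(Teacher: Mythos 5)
Your argument is correct and is exactly the standard derivation: unwind Definition~\ref{def::eps_subdiff}, substitute $d=z-v$, and recognize the supremum over $d$ as the conjugate of the translated function, consistent with the formula \eqref{eq::translate_func_conj}. The paper itself offers no separate proof of this proposition, deferring instead to \cite[Chapter~23]{Rockafellar70convexanalysis} and \cite[Section~6.7.1]{bertsekas_cvx_algos}, which contain precisely the argument you give.
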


Proposition~\ref{prop::subdiff_epi} relates the $\epsilon$-subdifferential of a function with the epigraph of its conjugate.
More detailed discussions can be found in~\cite[Chapter~23]{Rockafellar70convexanalysis}, \cite[Section~6.7.1]{bertsekas_cvx_algos}. This equivalence allows us to
construct the $\epsilon^\ast(v;z_1)$-subdifferential set described in Proposition~\ref{prop::eps_subdiff_1}
provided that~\eqref{eq::translate_func_conj} is computable. We prove below that this is indeed the case.

\begin{proposition}{\label{prop::eps_subdiff_2}}
The set $\partial_{\epsilon}\overline{f}^\gamma(v;z_1)$ is given by
\begin{align}{\label{eq::eps_subdiff_2}}
\partial_{\epsilon}\overline{f}^\gamma(v;z_1) = \nonumber&\Big\{g\;\mid\;(\gamma/2)\|g-\nabla f^\gamma(z_1)\|_2^2 \\
                      \nonumber&- \langle g-\nabla f^\gamma(z_1),v-z_1\rangle \\
                      &+ (1/2\gamma)\|z_1-v\|_2^2\leq \epsilon\Big\}\enspace.
\end{align}
\end{proposition}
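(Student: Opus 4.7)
The plan is to combine Proposition~\ref{prop::subdiff_epi} with an explicit computation of the convex conjugate of the quadratic upper bound $\overline{f}^\gamma(\cdot;z_1)$. By Proposition~\ref{prop::subdiff_epi}, the set $\partial_{\epsilon}\overline{f}^\gamma(v;z_1)$ equals the sublevel set $\{g\mid(\overline{f}_v^\gamma)^\star(g;z_1)\leq\epsilon\}$, and equation~\eqref{eq::translate_func_conj} tells me that
\[
(\overline{f}_v^\gamma)^\star(g;z_1) = (\overline{f}^\gamma)^\star(g;z_1) + \overline{f}^\gamma(v;z_1) - \langle v,g\rangle.
\]
So the only nontrivial task is computing $(\overline{f}^\gamma)^\star(\cdot;z_1)$ in closed form and then simplifying the resulting expression.

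First I would compute the conjugate. Since $\overline{f}^\gamma(z;z_1) = f^\gamma(z_1) + \langle \nabla f^\gamma(z_1),z-z_1\rangle + (1/2\gamma)\|z-z_1\|^2$ is a strongly convex quadratic in $z$, the supremum in the definition of the conjugate is attained at a unique point. After substituting $u=z-z_1$, the problem reduces to maximizing $\langle u, g-\nabla f^\gamma(z_1)\rangle - (1/2\gamma)\|u\|^2$, whose maximizer is $u^\star = \gamma(g-\nabla f^\gamma(z_1))$. Plugging back yields
\[
(\overline{f}^\gamma)^\star(g;z_1) = \langle z_1,g\rangle - f^\gamma(z_1) + \tfrac{\gamma}{2}\|g-\nabla f^\gamma(z_1)\|_2^2.
\]

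Next I would substitute this expression, together with the explicit form of $\overline{f}^\gamma(v;z_1)$ from Proposition~\ref{prop::upperQuadratic}, into the formula for $(\overline{f}_v^\gamma)^\star(g;z_1)$. The term $-f^\gamma(z_1)$ from the conjugate cancels the $f^\gamma(z_1)$ coming from $\overline{f}^\gamma(v;z_1)$, and the cross terms $\langle z_1,g\rangle$, $\langle v,g\rangle$ and $\langle \nabla f^\gamma(z_1), v-z_1\rangle$ recombine into the single inner product $-\langle g-\nabla f^\gamma(z_1), v-z_1\rangle$. After this regrouping one obtains
\[
(\overline{f}_v^\gamma)^\star(g;z_1) = \tfrac{\gamma}{2}\|g-\nabla f^\gamma(z_1)\|_2^2 - \langle g-\nabla f^\gamma(z_1), v-z_1\rangle + \tfrac{1}{2\gamma}\|z_1-v\|_2^2,
\]
and enforcing this quantity to be $\leq \epsilon$ gives precisely the claimed description~\eqref{eq::eps_subdiff_2}.

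I do not expect real obstacles here: the conjugate of a strongly convex quadratic is a textbook calculation, and once it is in hand, the result is a matter of algebraic simplification. The only place to be careful is tracking the affine pieces, since $\overline{f}^\gamma(v;z_1)$ and $(\overline{f}^\gamma)^\star(g;z_1)$ both contain an $f^\gamma(z_1)$ term and linear-in-$g$ terms whose cancellation is what ultimately produces the clean quadratic-in-$g$ inequality centered at $\nabla f^\gamma(z_1)$ and shifted by $v-z_1$.
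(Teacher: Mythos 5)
Your proposal is correct and follows exactly the paper's own (much terser) argument: compute the conjugate of the quadratic $\overline{f}^\gamma(\cdot;z_1)$ directly and substitute it into~\eqref{eq::translate_func_conj} via Proposition~\ref{prop::subdiff_epi}. Your explicit computation of $(\overline{f}^\gamma)^\star(g;z_1)$ and the subsequent cancellations check out and simply supply the details the paper omits.
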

\begin{proof}
 The conjugate of~\eqref{eq::quadratic_bound} can be easily computed using the definition since the function is a convex quadratic.
 The claim follows directly from computing it and substituting the result in~\eqref{eq::translate_func_conj}.
\end{proof}
\paragraph{The smallest local set} Propositions~\ref{prop::eps_subdiff_1} and~\ref{prop::eps_subdiff_2} tell us that
the gradient $\nabla f^\gamma(v)$ is contained within any of the 2-norm balls associated to the query point $z_1$ and described by~\eqref{eq::eps_subdiff_2}.
The radius of the balls varies with $\epsilon$. A question that naturally arises is
related to the size of this set, namely, \emph{whether we can find the smallest set in the family described by~\eqref{eq::eps_subdiff_2}.}
\begin{proposition}{\label{prop::eps_subdiff_smallest}}
Given a point $v\in\reals^n$ and a distance $\epsilon<\epsilon^\ast(v;z_1)$, there exists a function $f(x)$ with an associated Moreau envelope $f^\gamma(z)$ and
the corresponding local upper-bounding function $\overline{f}^\gamma(v;z_1)$ centered at some query point $z_1$, such that
$\nabla f^\gamma(v)\notin\partial_{\epsilon}\overline{f}^\gamma(v;z_1)$.
\end{proposition}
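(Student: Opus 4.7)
The plan is to exhibit an explicit witness function whose Moreau-envelope gradient at $v$ lies exactly on the boundary of the set $\partial_{\epsilon^\ast(v;z_1)}\overline{f}^\gamma(v;z_1)$ given by Proposition~\ref{prop::eps_subdiff_2}; strict monotonicity of this set in $\epsilon$ then forces the gradient to leave as soon as $\epsilon$ drops below $\epsilon^\ast(v;z_1)$.

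First, I would recast the set~\eqref{eq::eps_subdiff_2} in explicit ball form. Completing the square in $g$ in the defining quadratic yields
\[
\partial_{\epsilon}\overline{f}^\gamma(v;z_1) = \left\{\, g \;\mid\; \|g-c\|_2^2 \leq 2\epsilon/\gamma \,\right\},\qquad c := \nabla f^\gamma(z_1) + \tfrac{1}{\gamma}(v-z_1),
\]
so the set is a closed Euclidean ball whose centre $c$ does not depend on $\epsilon$ and whose radius $r(\epsilon) = \sqrt{2\epsilon/\gamma}$ is strictly increasing in $\epsilon$.

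Next, I would take the witness $f\equiv 0$, which is closed, proper, and convex. Its Moreau envelope is $f^\gamma\equiv 0$ with $\nabla f^\gamma \equiv 0$, the upper bound in~\eqref{eq::quadratic_bound} specialises to $\overline{f}^\gamma(z;z_1) = \tfrac{1}{2\gamma}\|z-z_1\|_2^2$, and consequently $\epsilon^\ast(v;z_1) = \tfrac{1}{2\gamma}\|v-z_1\|_2^2$. For this witness the ball centre becomes $c = \tfrac{1}{\gamma}(v-z_1)$ and the boundary radius $r(\epsilon^\ast) = \|v-z_1\|_2/\gamma$. A short calculation gives $\|\nabla f^\gamma(v)-c\|_2 = \|(v-z_1)/\gamma\|_2 = r(\epsilon^\ast)$, placing the gradient precisely on the boundary of $\partial_{\epsilon^\ast}\overline{f}^\gamma(v;z_1)$. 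Any $\epsilon<\epsilon^\ast(v;z_1)$ then satisfies $r(\epsilon) < r(\epsilon^\ast)$, hence $\|\nabla f^\gamma(v) - c\|_2 > r(\epsilon)$ and we obtain the desired escape $\nabla f^\gamma(v)\notin\partial_\epsilon\overline{f}^\gamma(v;z_1)$.

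The only non-trivial conceptual step is picking a witness that saturates~\eqref{eq::quadratic_bound}. The constant function is the natural extreme, because its flat envelope realises the full gap $\tfrac{1}{2\gamma}\|v-z_1\|_2^2$ between the quadratic upper bound and $f^\gamma(v)$; once this is recognised, the remainder is an algebraic verification with the closed-form ball. If one prefers to treat $\epsilon$ as given first, it suffices to choose $z_1$ with $\|v-z_1\|_2^2 > 2\gamma\epsilon$, which automatically enforces $\epsilon < \epsilon^\ast(v;z_1)$ for this witness.
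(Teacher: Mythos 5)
Your proof is correct and uses essentially the same witness as the paper: the paper takes $f=\delta(\cdot\mid\mathcal{X})$ with $v,z_1\in\mathcal{X}$, of which your $f\equiv 0$ is the special case $\mathcal{X}=\reals^n$; in both cases the envelope is flat, so the full gap $\epsilon^\ast(v;z_1)=\frac{1}{2\gamma}\|v-z_1\|_2^2$ is realised. The only difference is in the verification --- the paper substitutes $z=z_1$ into the $\epsilon$-subgradient inequality and derives the contradiction $\alpha\leq 0$, whereas you complete the square and show $\nabla f^\gamma(v)$ lies exactly on the boundary of the ball of radius $\sqrt{2\epsilon/\gamma}$; both are sound, and your closed-form ball (together with the remark on choosing $z_1$ so that $\|v-z_1\|_2^2>2\gamma\epsilon$) handles the quantifiers cleanly.
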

\begin{proof}
 The proof can be found in Appendix~\ref{app::2}.
\end{proof}
Proposition~\ref{prop::eps_subdiff_smallest} states that plugging $\epsilon^\ast(v;z_1)$ in~\eqref{eq::eps_subdiff_2}
results in the smallest set of the family containing $\nabla f^\gamma(v)$ for a general function $f$. The set can be written as
\begin{align}{\label{eq::eps_subdiff_explicit}}
&\mathcal{G}(v;z_1) \nonumber := \partial_{\epsilon^\ast(v;z_1)}\overline{f}^\gamma(v;z_1) \\
&  = \scalebox{1.00}{$\left\{g\;\mid\;\frac{\gamma}{2}\|g-\nabla f^\gamma(z_1)\|_2^2 - \langle g,v-z_1\rangle\leq f^\gamma(z_1)-f^\gamma(v)\right\}$}.
\end{align}

\paragraph{A global gradient set} To recap, we showed that $\nabla f^\gamma(v)$ is contained in the family of $\epsilon$-subdifferential sets $\partial_{\epsilon}\overline{f}^\gamma(v;z_1)$
constructed around $v$ and associated to the query point $z_1$ (Proposition~\ref{prop::eps_subdiff_1}),
that these sets can be explicitly described (Proposition~\ref{prop::eps_subdiff_2}) and that the smallest set of the family
is recovered for the choice of $\epsilon=\epsilon^\ast(v;z_1)$ from~\eqref{eq::epsilon} (Proposition~\ref{prop::eps_subdiff_smallest}).

Making use of~\eqref{eq::eps_subdiff_explicit}, the set of possible gradients of $f^\gamma$ at $v$ can be described by
\begin{align}{\label{eq::eps_subdiff_all}}
\mathcal{G}(v) \nonumber&:= \bigcap_{j\in\mathcal{J}}\partial_{\epsilon^\ast(v;z_j)}\overline{f}^\gamma(v;z_j)\\
               \nonumber&= \scalebox{1.00}{$\bigcap_{j\in\mathcal{J}}\Big\{g\;\mid\;\frac{\gamma}{2}\|g-\nabla f^\gamma(z_1)\|_2^2 - \langle g,v-z_1\rangle\leq$} \\
                        & \qquad\qquad\qquad\scalebox{1.00}{$f^\gamma(z_1)-f^\gamma(v)\Big\},$}
\end{align}
where $\mathcal{J}$ is a set of indices cooresponding to generated query points. As a consequence,
the set of potential gradients can be described as an intersection of a finite number of 2-norm balls.
It also trivially holds that $\bigcap_{j=1}^{\infty}\mathcal{G}(v;z_j) = \{\nabla f^\gamma(v)\}$, \ie,
upon complete construction of the envelope, the above intersection reduces to the singleton set $\{\nabla f^\gamma(v)\}$.

\paragraph{Relation to tightest upper bounding function} As a last step, we will derive the relation between \emph{$\mathcal{G}(v)$ and the
$\epsilon$-subdifferential set associated to the tightest constructed upper bound of $f^\gamma$ at $v$.}

To this end, we introduce the function corresponding to the convex hull of the generated upper bounding quadratics
\begin{equation}{\label{eq::convex_hull}}
\overline{f}^\gamma(z\mid\mathcal{J}) = \conv\{\overline{f}^\gamma(z;z_j)\mid j\in\mathcal{J}\}\enspace.
\end{equation}
Note the abuse of notation in~\eqref{eq::convex_hull} since it actually describes a set and not a function. To
define it properly, we should write
\[
\overline{f}^\gamma(z\mid\mathcal{J}) = \underset{\substack{\sum_{j\in\mathcal{J}}\theta_j z_j=z,\\ \theta_j\geq0,\;\sum_{j\in\mathcal{J}}\theta_j=1}}{\inf}\sum_{j\in\mathcal{J}}\theta_j\overline{f}^\gamma(z;z_j)\enspace,
\]
according to~\cite[Theorem~5.6]{Rockafellar70convexanalysis}, however we will keep using~\eqref{eq::convex_hull} for simplicity.

The function is illustrated in Figure~\ref{fig::sketch2} for two scalar functions. The dependence on $\mathcal{J}$ demonstrates
that the function is refined (becomes tighter) with the generation of new query points. The following result holds:
\begin{theorem}{\label{thm::main}}
Given a number of generated query points $\{z_j\},\;j\in\mathcal{J}$ and $v\in\reals^n$,
\[
\mathcal{G}(v)=\partial_{\epsilon^\ast(v\mid\mathcal{J})}\overline{f}^\gamma(v\mid\mathcal{J})\enspace,
\]
\end{theorem}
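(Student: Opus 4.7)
The plan is to show that both sides coincide with the set of slopes $g$ such that a single affine function, anchored at the value $f^\gamma(v)$, globally minorizes the respective bounding function. The whole argument hinges on the observation that the parameterization $\epsilon^\ast(v;z_j)$ from~\eqref{eq::epsilon} is designed precisely so that the anchor height appearing in Definition~\ref{def::eps_subdiff} is the \emph{same} across all query points.

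First I would exploit this: by construction $\overline{f}^\gamma(v;z_j) - \epsilon^\ast(v;z_j) = f^\gamma(v)$ for every $j\in\mathcal{J}$, so reading off Definition~\ref{def::eps_subdiff} rewrites each local set as
\[
\partial_{\epsilon^\ast(v;z_j)}\overline{f}^\gamma(v;z_j) = \{g\mid \overline{f}^\gamma(z;z_j) \geq f^\gamma(v) + \langle g, z-v\rangle,\ \forall z\in\reals^n\}.
\]
Intersecting over $j$ and collapsing the universal quantifier on $j$ into a pointwise minimum yields
\[
\mathcal{G}(v) = \{g\mid f^\gamma(v) + \langle g, z-v\rangle \leq \min_{j\in\mathcal{J}}\overline{f}^\gamma(z;z_j),\ \forall z\in\reals^n\}.
\]

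Next I would invoke the defining property of~\eqref{eq::convex_hull}: $\overline{f}^\gamma(\cdot\mid\mathcal{J})$ is the greatest convex function pointwise dominated by $\min_j\overline{f}^\gamma(\cdot;z_j)$. Since affine functions are convex, an affine function is a global minorant of $\min_j\overline{f}^\gamma(\cdot;z_j)$ if and only if it is a global minorant of $\overline{f}^\gamma(\cdot\mid\mathcal{J})$. Applying this equivalence to $l(z)=f^\gamma(v)+\langle g,z-v\rangle$ lets me replace the pointwise minimum by the convex hull in the description above. Defining $\epsilon^\ast(v\mid\mathcal{J}) := \overline{f}^\gamma(v\mid\mathcal{J}) - f^\gamma(v)$ as the natural extension of~\eqref{eq::epsilon}, so that $\overline{f}^\gamma(v\mid\mathcal{J}) - \epsilon^\ast(v\mid\mathcal{J}) = f^\gamma(v)$, the resulting set is exactly $\partial_{\epsilon^\ast(v\mid\mathcal{J})}\overline{f}^\gamma(v\mid\mathcal{J})$, yielding the claim.

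The main obstacle will be the affine-minorant equivalence of the second step, since $\overline{f}^\gamma(\cdot\mid\mathcal{J})$ is in general strictly smaller than the pointwise minimum. One direction is immediate: $l\le \overline{f}^\gamma(\cdot\mid\mathcal{J}) \le \min_j\overline{f}^\gamma(\cdot;z_j)$ by the convex-hull inequality. The converse requires that any affine minorant of the pointwise minimum be bounded above by the greatest convex minorant, which is the universal property of~\eqref{eq::convex_hull}. As a byproduct I would also need to verify that $\epsilon^\ast(v\mid\mathcal{J})\geq 0$ so that Definition~\ref{def::eps_subdiff} applies; this follows because $f^\gamma$ itself is a convex minorant of each $\overline{f}^\gamma(\cdot;z_j)$ by Proposition~\ref{prop::upperQuadratic}, and hence lies below their convex hull.
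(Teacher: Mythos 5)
Your proposal is correct, and it reaches the conclusion by a genuinely different (and arguably cleaner) route than the paper. The paper works entirely on the conjugate side: it invokes Proposition~\ref{prop::subdiff_epi} to write each $\epsilon$-subdifferential as a sublevel set of the translated conjugate, uses the identity $(\conv\{\overline{f}^\gamma(\cdot;z_j)\})^\star=\sup_j(\overline{f}^\gamma)^\star(\cdot;z_j)$ (Rockafellar, Theorem~16.5) to handle the convex hull, and then observes that the constant $\overline{f}^\gamma(v;z_j)-\overline{f}^\gamma(v\mid\mathcal{J})$ by which the two $\epsilon$'s differ cancels exactly, so that a sup-constraint splits into the intersection of the individual constraints. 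You instead stay in the primal: you notice that the anchor height $\overline{f}^\gamma(v;z_j)-\epsilon^\ast(v;z_j)=f^\gamma(v)$ is common to all $j$, recast every set as the slopes of affine minorants of the corresponding bounding function passing through $(v,f^\gamma(v))$, and then use the universal property of the convex hull as the greatest convex minorant of the pointwise minimum to swap $\min_j\overline{f}^\gamma(\cdot;z_j)$ for $\overline{f}^\gamma(\cdot\mid\mathcal{J})$. These are dual formulations of the same cancellation --- the conjugate-of-convex-hull formula is precisely the Fenchel-dual statement of your affine-minorant equivalence --- but your version avoids the conjugate machinery of Proposition~\ref{prop::subdiff_epi} and Lemma~\ref{thm::rocka_convex_hull_conjugate} entirely, and it replaces the paper's somewhat informal step (``it is evident from Figure~\ref{fig::sketch2} that $\epsilon^\ast(v\mid\mathcal{J})$ \ldots can be expressed with respect to any of the distances \ldots'') with the explicit common-anchor identity. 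You also correctly flag and discharge the two points that need care: the nontrivial direction of the minorant equivalence (an affine minorant of the pointwise min lies below the greatest convex minorant, by definition of the convex hull) and the nonnegativity of $\epsilon^\ast(v\mid\mathcal{J})$ (since $f^\gamma$ is itself a convex minorant of each quadratic, hence of their convex hull). No gaps.
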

where $\epsilon^\ast(v\mid\mathcal{J})=\overline{f}^\gamma(v\mid\mathcal{J})-f^\gamma(v)$.
\begin{proof}
The proof can be found in Appendix~\ref{app::3}.
\end{proof}

\begin{figure*}
 \centering
  \begin{tabular}[c]{cc}
  \begin{subfigure}[c]{0.45\textwidth}
    \includegraphics[width=1.00\linewidth]{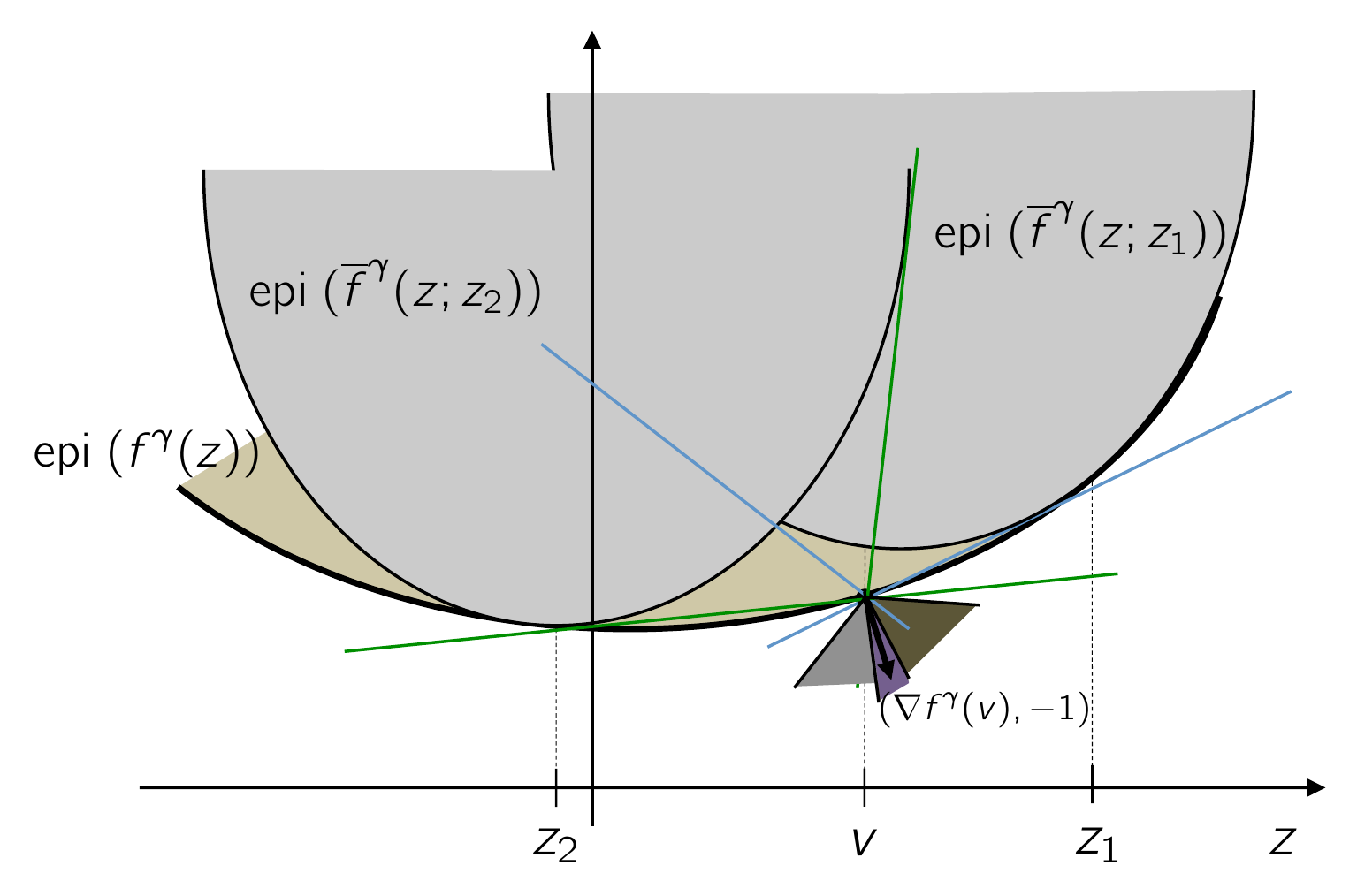}
    \label{subfig::epis:3}
  \end{subfigure}&
  \begin{subfigure}[c]{0.45\textwidth}
    \includegraphics[width=1.00\linewidth]{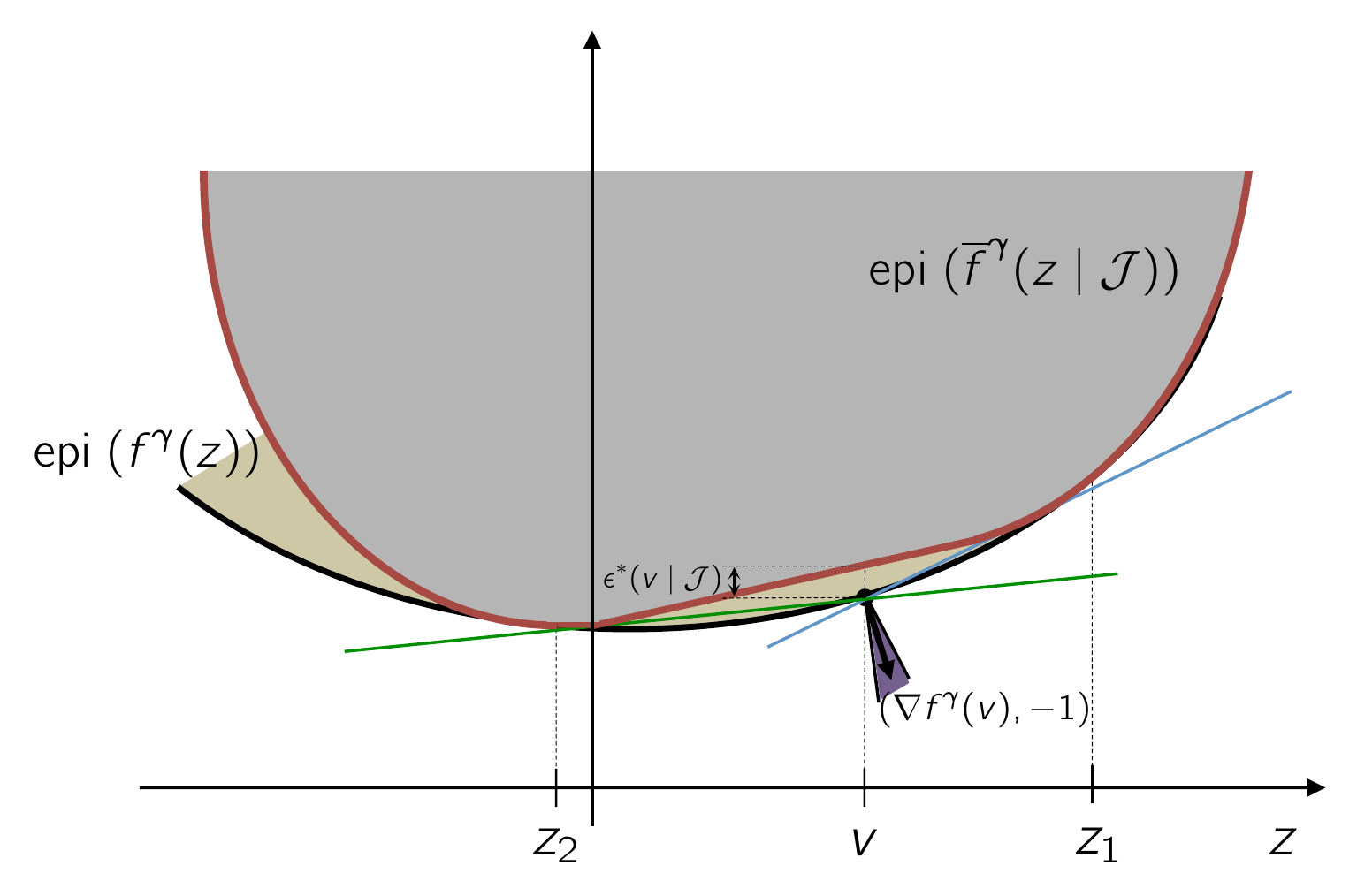}
    \label{subfig::epis:4}
  \end{subfigure}\\
\end{tabular}
\caption{Assuming two query points $z_1$ and $z_2$, $\nabla f^\gamma(v)$ can be located in the intersection of the two normal cones, which is sketched in purple color.
In addition, the function $\overline{f}^\gamma(z\mid\mathcal{J})=\conv\{\overline{f}^\gamma(z;z_1),\overline{f}^\gamma(z;z_2)\}$ is drawn in deep red color. It is evident that the normal cone corresponding to the intersection of the two sets (left) is identical to the set $\partial_{\epsilon^\ast(v\mid\mathcal{J})}\overline{f}^\gamma(v\mid\mathcal{J})$ on the right.}
\label{fig::sketch2}
\end{figure*}

\paragraph{An approximation of the global set} A problematic aspect of the global set $\mathcal{G}(v)$ when it comes to computing it, is the dependence on the unknown quantity $f^\gamma(v)$ in~\eqref{eq::eps_subdiff_explicit}. We want to drop this dependency by approximating $\mathcal{G}(v)$ with another set, namely $\hat{\mathcal{G}}(v)$.

To that end, we use the fundamental inequality (see, \eg,~\cite[Proposition~6.1.9]{bertsekas_cvx_algos}) 
\[
\scalebox{0.88}{$f^\gamma(z_1)-f^\gamma(v) \leq -\langle\nabla f^\gamma(z_1),v-z_1\rangle - \frac{\gamma}{2}\|\nabla f^\gamma(v)-\nabla f^\gamma(z_1)\|^2_2$}.
\]
Using the inequality above in the right-hand side of~\eqref{eq::eps_subdiff_explicit} leaves us, still, with another unknown quantity, \ie, $\nabla f^\gamma(v)$.
A reasonable thought would be to replace $\nabla f^\gamma(v)$ with $g$ and look for a $g\approx\nabla f^\gamma(v)$ that satisfies the above inequality. By doing so, we end up with the inequality
\begin{align}{\label{eq::eps_subdiff_cocoercive}}
&\hat{\mathcal{G}}(v;z_1) := \scalebox{0.88}{$\left\{g\;\mid\;\gamma\|g-\nabla f^\gamma(z_1)\|_2^2 - \langle g-\nabla f^\gamma(z_1),v-z_1\rangle\leq 0\right\}$}.
\end{align}
Interestingly,~\eqref{eq::eps_subdiff_cocoercive} is the necessary condition
for convexity and Lipschitz continuity of the gradient of $f^\gamma$, the so-called
\emph{co-coercivity property} of the gradient, applied to the pair of points $(z_1,v)$. 
As such, the set $\hat{\mathcal{G}}(v;z_1)$ is a \emph{valid approximation of~\eqref{eq::eps_subdiff_explicit}} in the sense that it contains the unknown gradient $\nabla f^\gamma(v)$. In addition, it does not depend 
on any unknown quantities. The (approximate) global set is given by 
\[
\hat{\mathcal{G}}(v)=\bigcap_{j\in\mathcal{J}}\hat{\mathcal{G}}(v;z_j)\enspace.
\]
\begin{remark}
We cannot say anything about the relative size of the sets $\hat{\mathcal{G}}(v)$ and $\mathcal{G}(v)$. 
As a matter of fact, we can devise examples that show that no set is strictly contained in the other. 
Therefore, we simply use $\hat{\mathcal{G}}(v)$ in the computations since we can explicitly compute it.
\end{remark}

\section{Communication}{\label{sec::communication}}
Let us now move back to the original distributed setting. 

In~\cite{learnProx_ECC} we considered a special case of~\eqref{eq::distopt}, namely when $f_i(x_i)=\delta(x_i\mid\mathcal{X}_i)$ and $h$ is differentiable with $L$-Lipschitz continuous gradient.
In this particular setting,~\eqref{eq::distopt} can be solved using the Projected Gradient Method.
Although useful in many cases, the projected gradient method's applicability is limited
to problems where the objective is minimized in the intersection of a number of convex sets.
In this work we consider the more general case of solving~\eqref{eq::distopt} when $h$ is not necessarily
differentiable and $f_i$ encapsulates a convex objective restricted within some
convex set. The Alternating Direction Method of Multipliers (ADMM) is a widely
applicable and resilient method to consider for such problems.

The method comprises the iterations:
\begin{equation}   \label{eq::admm}
  \begin{array}{ll}
    x^{k+1}       &= \underset{x\in\reals^n}{\argmin}\left\{h(x) + (\rho/2)\|x-y^k+(1/\rho)\lambda^k\|^2\right\} \\
    \tilde{x}^{k+1} &= \eta^kx^{k+1} + (1-\eta^k)y^k \\
    y_i^{k+1}     &= \underset{y_i\in\reals^{n_i}}{\argmin}\left\{f_i(y_i) + (\rho/2)\|y_i-\tilde{x}_i^{k+1}-(1/\rho)\lambda_i^k\|^2\right\} \\
    \lambda^{k+1} &= \lambda^{k} + \rho(\tilde{x}^{k+1}-y^{k+1})\enspace,
  \end{array}
\end{equation}
$\rho>0$ and $\eta^k\in(0,2)$ a relaxation parameter.
The splitting described in~\eqref{eq::admm} is achieved by introducing a copy $y=x$ and the associated
dual variables $\lambda$. Observe that the $y$-minimization step constitutes local proximal
minimization problems of the form
\begin{align}{\label{eq::prox2MoreauGrad}}
y_i^{k+1} \nonumber&= \prox{_{\frac{1}{\rho} f_i}}(\underbrace{\tilde{x}_i^{k+1}+(1/\rho)\lambda_i^k}_{v_i^k}) \\
          &= v_i^k - \gamma\nabla f_i^\gamma(v_i^k),\quad \gamma=1/\rho\enspace.
\end{align}

Our purpose is to solve~\eqref{eq::distopt} using~\eqref{eq::admm} while minimizing communication between the
coordinator and the agents. With this purpose in mind, we use the theory developed in the previous
section to build sets in order to locate the solutions of the resulting proximal minimization problems that
are repetitively solved in every iteration of the algorithm. Let us denote with
$\hat{\mathcal{G}}_i(v_i;z_{i,j}),\;i=1,\ldots,N$ the set given by~\eqref{eq::eps_subdiff_cocoercive}, associated
to agent $i$. The coordinator can keep in its memory a set
$\hat{\mathcal{G}}_i(v_i):=\bigcap_{j\in\mathcal{J}_i}\hat{\mathcal{G}}_i(v_i;z_{i,j})$, $\mathcal{J}_i$ being the number of queries generated up to
the current algorithmic iteration, for every agent $i=1,\ldots,N$.
The set is updated whenever a communication round occurs.
With every query point, $\hat{\mathcal{G}}_i(v_i)$ is augmented by one more inequality. \emph{The challenge that needs to
be addressed is the design of a certification test, based on which the decision to
communicate or not is made.}

We first need a scheme that selects a valid approximation of $\nabla f_i^\gamma(v_i^k)$, denoted as $g_i^k$. We know that any point lying in the interior of the
set $\hat{\mathcal{G}}_i(v_i^k)$ is a potential candidate. A reasonable guess for a `good' $g_i^k$ would be to
identify the hyperplane that is closest to $f_i^\gamma$ at $v$, namely to find the index
\begin{equation}{\label{evaluation}}
 j^{\ast} = \underset{j\in\mathcal{J}_i}{\argmax}\left\{f_i^\gamma(z_{i,j}) + \langle \nabla f_i^\gamma(z_{i,j}), v_i^k-z_{i,j}\rangle\right\}\enspace,
\end{equation}
expecting that the gradient of the envelope won't have changed much if the points $z_{i,j^{\ast}}$ and $v_i^k$ are close to each other.
If $\nabla f^\gamma(z_{i,j^{\ast}})\in\hat{\mathcal{G}}_i(v_i^k)$, then $g_i^k=\nabla f^\gamma(z_{i,j^{\ast}})$, otherwise the projection
of $\nabla f^\gamma(z_{i,j^{\ast}})$ onto $\hat{\mathcal{G}}_i(v_i^k)$ is taken. The latter is achieved by solving the quadratically
constrained quadratic program (QCQP)
\begin{equation}   \label{eq::qcqp}
  \begin{array}{ll}
    \mbox{minimize} & \|g_i-\nabla f_i^\gamma(z_{i,j^{\ast}})\|_2  \\
    \mbox{subject to} & g_i \in \hat{\mathcal{G}}_i(v_i^k;z_{i,j}),\;j=1,\ldots,\mathcal{J}_i\enspace,
  \end{array}
\end{equation}
with variable $g_i\in\reals^{n_i}$.

Consequently, we need to decide whether the resulting $g_{i}^{k}$ checks out as
a valid gradient estimate. In order to do so, we propose a heuristic test based on 
the progress of the ADMM residual. The residual is defined with respect to the 
consensus condition $x=y$, \ie, $s^k=\|x^k-y^k\|$, and the communication test reads:
\begin{equation}\label{eq::VI}
\mathcal{T}(s^k,s^{k+1}) := s^k-s^{k+1} < 0\enspace.
\end{equation}

Monotonic decrease of the residual is neither a necessary nor a sufficient condition for the 
convergence of ADMM. It can be used, nevertheless, as a certification test for assessing the validity of the
approximate proximal minimizer.

More specifically, any guess for $\nabla f_i(v_i^k)$ denoted by $g_i^k$ will give rise to
an approximate minimizer $\hat{y}_i^{k+1}=v_i^k-\gamma g_i^k$. This minimizer is used, in turn, to compute the
residual $s_i^{k+1}=x_i^{k+1}-\hat{y}_i^{k+1}$. After performing these updates for all the agents,
inequality~\eqref{eq::VI} is checked. If it holds, no communication is necessary,
while in the opposite case all the agents need to communicate.

The rational behind checking inequality~\eqref{eq::VI} is to assess whether any progress has been
made with regard to reaching agreement between the solutions.
If not, there is an indication that (some of) the approximate optimizers $\hat{y}_i^{k+1}$ were not `in the right direction',
thereby it is better to correct via communication. In addition, the test can be performed at 
the coordinator's level with no extra knowledge about the agents.

Putting together the results of the preceding sections, we describe below the reduced
communication scheme that solves problem~\eqref{eq::distopt}.
\begin{algorithm}
\caption{Distributed Optimization with Estimated Proximal Operator}
\label{al::algo}
\begin{algorithmic}[1]
\Require $x_i^0\in \reals^{n_i}$, $\mathcal{G}_i=\emptyset,\;i=1,\ldots,N$, constant $\rho>0$. Iteration counter is set to $k=0$, $k_{\mathrm{stop}}>0$.
\While{$k<k_{\mathrm{stop}}$}
 \State Compute $x^{k+1}$ from~\eqref{eq::admm} \Comment{Coordinator}
 \State Compute $v^{k} = \tilde{x}^{k+1}+(1/\rho)\lambda^k$ \Comment{Coordinator}
 \State Solve~\eqref{eq::qcqp} $i=1,\ldots,N$ to get $g^k$ \Comment{Coordinator}
 \State Compute $y^{k+1} = v^k - (1/\rho) g^k$ \Comment{Coordinator}
 \State Compute $\lambda^{k+1}$ from~\eqref{eq::admm} \Comment{Coordinator}
 \State Compute $s^{k+1}=\|x^{k+1}-y^{k+1}\|$ \Comment{Coordinator}
  \If{$\mathcal{T}(s^k,s^{k+1}) < 0$}
    \State Transmit $v_{i}^{k}$ to Agent $i,\;i=1,\ldots,N$  \Comment{Coordinator}
    \State Compute $y_i^{k+1}$ from~\eqref{eq::admm} \Comment{Agent $i$}
    \State Compute $\lambda_i^{k+1}$ from~\eqref{eq::admm} \Comment{Agent $i$}
    \State Transmit $(y_i^{k+1},\lambda_{i}^{k+1})$ to the Coordinator \Comment{Agent $i$}
    \State Update sets of query points with $z_{i,j}=v_i^k$ and $\hat{\mathcal{G}}_i(v_i)=\hat{\mathcal{G}}_i(v_i)\cap\hat{\mathcal{G}}_i(v_i;z_{i,j})$
           with $\nabla f_i^\gamma(v_i^k)=\rho(v_i^k-y_i^{k+1})$, $i=1,\ldots,N$ \Comment{Coordinator}
  \EndIf
\EndWhile
\State $k \leftarrow k+1$
\end{algorithmic}
\end{algorithm}

\begin{remark}
The test devised in~\eqref{eq::qcqp} is by no means the only way to find an
approximate gradient. As a matter of fact, it is proposed in~\cite{learnProx_ECC} to
use the Chebyshev center of the set $\mathcal{G}_i(v_i)$ as the gradient estimate. The
latter would, however, result in a semidefinite program (SDP), which is generally more
expensive to solve that the proposed QCQP.
\end{remark}

\begin{remark}
In the way Algorithm~\ref{al::algo} is written, if the test is not passed, all the agents
will communicate (Line~8). One could, however, design different communication schemes. For
example, the coordinator could start transmitting to the agents sequentially until the test
is passed. Indeed, any subset of agents could be selected based on some
specified criterion, as long as the test is passed. We deliberately avoided to comment on
these potential formulations for the sake of brevity.
\end{remark}

\section{Convergence}{\label{sec::convergence}}
In this section we give conditions that ensure convergence of the scheme described by
Algorithm~\ref{al::algo}. The analysis comprises two steps: We \emph{first cast~Algorithm~\ref{al::algo}
as an ADMM iteration with additive error in its $y$-update.} Consequently,
\emph{we employ known results from inexact fixed point iteration theory to give
conditions for convergence of that `ADMM iteration with errors'.}

For analysis purposes, we are going to express ADMM as a \emph{Douglas-Rachford (DR) iteration of the dual
variables $\lambda$}. The equivalence of the two algorithms can be found in, \eg,~\cite{Eckstein_dr},
while a detailed derivation of ADMM from DR is given in the lecure notes~\cite{EE_236C}. By using standard
conjugate duality, we can rewrite~\eqref{eq::distopt} as
 \begin{equation}{\label{eq::dual}}
  \begin{aligned}
  &{\text{minimize}} && H(\lambda) + F(\lambda) \enspace,
  \end{aligned}
 \end{equation}
  where $\lambda$ is the dual variable in~\eqref{eq::admm}, $H(\lambda):=h^\star(-\lambda)$
  and $F(\lambda):=\sum_{i=1}^N f_i^\star(\lambda_i)$.
Applying the ADMM iteration~\eqref{eq::admm} to solve~\eqref{eq::distopt}
is equivalent to applying the following DR iteration to problem~\eqref{eq::dual}:
\begin{align}   \label{eq::dr}
    r^{k+1}       \nonumber&= \prox{_{\rho H}}(2\lambda^k-z^k) \\
    z^{k+1}       \nonumber&= z^k + \eta^k(r^{k+1} - \lambda^k) \\
    \lambda^{k+1} &= \prox{_{\rho F}}(z^{k+1})\enspace,
\end{align}
where
\[
\prox{_{\rho F}}(z^{k+1}) := (\prox{_{\rho f_1^\ast}}(z_1^{k+1}),\ldots,\prox{_{\rho f_N^\ast}}(z_N^{k+1})).
\]

We will now illustrate that the approximate minimizer $y_i^{k+1}$ of~\eqref{eq::admm} can be expressed
as the solution of a proximal minimization problem with some additive error.
Let us first express the approximate iteration
$\hat{y}_i^{k+1} = v_{i}^{k} - \gamma g_{i}^{k}$ as \emph{an inexact solution to a proximal minimization problem}.
To this end, we introduce the error sequence
\begin{equation}{\label{eq::error_sequence}}
\{e^k\}=\{g^k-\nabla f^\gamma(v^k)\}\enspace,
\end{equation}
with $\nabla f^\gamma(v^k) = (\nabla f_1^\gamma(v_1^k),\ldots,\nabla f_N^\gamma(v_N^k))$.
Using~\eqref{eq::error_sequence}, the following result holds:
\begin{proposition}\label{prop::inexact_DR}
Algorithm~\ref{al::algo} can be expressed as the inexact DR iteration
\begin{align}   \label{eq::dr_inexact}
    r^{k+1}       \nonumber&= \prox{_{\rho H}}(2\lambda^k-z^k) \\
    z^{k+1}       \nonumber&= z^k + \eta^k(r^{k+1} - \lambda^k) \\
    \lambda^{k+1} &= \prox{_{\rho F}}(z^{k+1}) + e^k\enspace,
\end{align}
with $e^k$ defined in~\eqref{eq::error_sequence}.
\end{proposition}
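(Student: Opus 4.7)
The plan is to reduce the statement to the standard equivalence between ADMM on~(\ref{eq::distopt}) and Douglas--Rachford on the dual~(\ref{eq::dual}), and to trace the approximation error through that correspondence. The key observation is that Algorithm~\ref{al::algo} coincides with the nominal ADMM iteration~(\ref{eq::admm}) line by line, with the sole exception that the exact $y^{k+1}=\prox_{(1/\rho)f}(v^k)$ is replaced by $\hat y^{k+1}=v^k-\gamma g^k$, where $\gamma=1/\rho$. The dual-ascent step is still executed verbatim, but with $\hat y^{k+1}$ substituted for $y^{k+1}$.

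First I would invoke the standard ADMM-to-DR derivation (as in~\cite{Eckstein_dr,EE_236C}) to identify the iterates of~(\ref{eq::admm}) with those of~(\ref{eq::dr}). In that derivation, the $r$-update and the $z$-update are produced, respectively, by the $x$-minimization and by the relaxation step (together with a change of variable under which $z^{k+1}$ corresponds to $\rho v^k$). Neither of these two lines involves $f_i$ or its conjugate, so they carry over unchanged when the $y$-update is replaced by its approximation. This establishes the first two equations of~(\ref{eq::dr_inexact}).

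Next I would analyze the $\lambda$-update. Applying Moreau's decomposition with scaling $\lambda=1/\rho$ gives $\prox_{\rho F}(\rho v^k)=\rho\bigl(v^k-\prox_{(1/\rho)f}(v^k)\bigr)=\rho(v^k-y^{k+1})$, and the dual-ascent line of~(\ref{eq::admm}) simplifies to $\lambda^{k+1}=\rho(\tilde x^{k+1}+(1/\rho)\lambda^k)-\rho y^{k+1}=\rho(v^k-y^{k+1})$, so in the exact iteration $\lambda^{k+1}=\prox_{\rho F}(z^{k+1})$ with $z^{k+1}=\rho v^k$. By~(\ref{eq::optimizer}), this common value is $\nabla f^\gamma(v^k)$. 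Substituting $\hat y^{k+1}=v^k-\gamma g^k$ into the same dual-ascent step in Algorithm~\ref{al::algo} then yields $\hat\lambda^{k+1}=\rho(v^k-\hat y^{k+1})=g^k=\nabla f^\gamma(v^k)+e^k=\prox_{\rho F}(z^{k+1})+e^k$, which is precisely the third line of~(\ref{eq::dr_inexact}).

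The main technical hurdle is the Moreau-style bookkeeping that converts an additive error on the primal $y$-block into an additive error on the dual $\lambda$-block; one must be careful with the scalings $\gamma=1/\rho$ and verify that the relaxation parameter $\eta^k$ enters only the $z$-update, so that the error in the third line is exactly $e^k=g^k-\nabla f^\gamma(v^k)$ as defined in~(\ref{eq::error_sequence}). Once this bookkeeping is in place, the rest of the argument is a direct substitution.
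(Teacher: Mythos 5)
Your proposal is correct and follows essentially the same route as the paper's own proof: both arguments use the Moreau identity to show that the exact dual update satisfies $\lambda^{k+1}=\rho(v^k-y^{k+1})=\prox_{\rho F}(\lambda^k+\rho\tilde{x}^{k+1})$, identify $z^{k+1}=\lambda^k+\rho\tilde{x}^{k+1}=\rho v^k$ via the standard ADMM--DR correspondence, and observe that replacing $y^{k+1}$ by $\hat y^{k+1}=v^k-\gamma g^k$ perturbs the dual update by exactly $e^k=g^k-\nabla f^\gamma(v^k)$. The only difference is one of detail: the paper explicitly verifies the identification $z^{k+1}=\lambda^k+\rho\tilde{x}^{k+1}$ through the change of variables $w=z-\lambda$, whereas you defer this to the cited references.
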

\begin{proof}
The proof is deferred to Appendix~\ref{app::4}.
\end{proof}

Capitalizing on this result, the inexact DR iteration~\eqref{eq::dr_inexact}
can be analyzed in the more general context of inexact fixed point iterations,
analyzed in detail in~\cite[Section~5.2]{Liang2016}. To this end, the iteration
can be written in the more compact notation
\begin{equation}{\label{eq::fixed_point}}
z^{k+1} = z^k + \eta^k(Tz^k+\zeta^k-z^k)\enspace,
\end{equation}
by introducing the operator
\[
T = \frac{1}{2}\left(\refl{_{\rho H}}\refl{_{\rho F}}+I\right),
\]
with $T:\reals^n\mapsto\reals^n$ and
\begin{equation}{\label{eq::global_error}}
 \zeta^k = \frac{1}{2}\left(\refl{_{\rho H}}\left(\refl{_{\rho F}}(z^k)+2e^k\right)+z^k\right)-Tz^k\enspace.
\end{equation}
With iteration~\eqref{eq::fixed_point} in place, we are ready to give conditions for the convergence of
the DR iteration~\eqref{eq::dr_inexact} and, consequently, of Algorithm~\ref{al::algo}.
\begin{theorem}\cite[Theorem~2]{combettes:hal-00621820}{\label{thm::convergence}}
The multiplier sequence $\{\lambda^k\}$ generated from Algorithm~\ref{al::algo} will
converge to an optimizer of~\eqref{eq::dual} provided that $\sum_{k=1}^{\infty}\eta^k\|e^k\|<\infty$ and 
$\sum_{k=1}^{\infty}\eta^k(2-\eta^k)=\infty$.
\end{theorem}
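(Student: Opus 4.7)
The strategy is to verify the hypotheses of~\cite[Theorem~2]{combettes:hal-00621820} for the inexact fixed-point recursion~\eqref{eq::fixed_point}. That theorem asserts that, for a nonexpansive operator $T$ with nonempty fixed-point set, the conditions $\sum_k\eta^k(2-\eta^k)=\infty$ and $\sum_k\eta^k\|\zeta^k\|<\infty$ suffice to guarantee $z^k\to z^\star\in\mathrm{Fix}(T)$. My plan is therefore to (a) verify the operator-theoretic prerequisites on $T$, (b) transfer the paper's summability assumption from the primal error $\{e^k\}$ to the operator-level error $\{\zeta^k\}$, and (c) lift convergence from $\{z^k\}$ back to $\{\lambda^k\}$.

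For step (a), I would note that $\refl_{\rho H}$ and $\refl_{\rho F}$ are nonexpansive, being reflections of proximal operators of closed proper convex functions (which are firmly nonexpansive). Their composition is hence nonexpansive, and $T=\tfrac12(\refl_{\rho H}\refl_{\rho F}+I)$ is $\tfrac12$-averaged, i.e.\ the standard Douglas--Rachford operator. Under the standing solvability assumption on~\eqref{eq::distopt}, the DR--ADMM equivalence cited before~\eqref{eq::dr} supplies $\mathrm{Fix}(T)\neq\emptyset$, and every $z^\star\in\mathrm{Fix}(T)$ yields an optimizer $\lambda^\star=\prox_{\rho F}(z^\star)$ of~\eqref{eq::dual}.

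For step (b), the key estimate is a one-line consequence of~\eqref{eq::global_error}: since
\[
\zeta^k=\tfrac12\bigl(\refl_{\rho H}\bigl(\refl_{\rho F}(z^k)+2e^k\bigr)-\refl_{\rho H}\bigl(\refl_{\rho F}(z^k)\bigr)\bigr),
\]
nonexpansiveness of $\refl_{\rho H}$ gives $\|\zeta^k\|\le\|e^k\|$. Hence $\sum_k\eta^k\|\zeta^k\|\le\sum_k\eta^k\|e^k\|<\infty$ directly from the assumption.

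For step (c), the cited theorem delivers $z^k\to z^\star\in\mathrm{Fix}(T)$; continuity of $\prox_{\rho F}$ then gives $\prox_{\rho F}(z^{k+1})\to\lambda^\star$, and together with $e^k\to 0$ (a consequence of the summability of $\eta^k\|e^k\|$ once one uses that any admissible choice of $\{\eta^k\}$ compatible with $\sum_k\eta^k(2-\eta^k)=\infty$ is bounded away from $0$ along an appropriate subsequence) the identity $\lambda^{k+1}=\prox_{\rho F}(z^{k+1})+e^k$ from~\eqref{eq::dr_inexact} yields $\lambda^k\to\lambda^\star$. The principal obstacle in the argument is the error-transfer bound in step (b): one must connect the primal proximal-approximation error $e^k$ introduced in~\eqref{eq::error_sequence} to the operator-level perturbation $\zeta^k$ appearing in the fixed-point reformulation~\eqref{eq::fixed_point}. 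Once the inequality $\|\zeta^k\|\le\|e^k\|$ is in hand, the rest reduces to a direct invocation of Combettes' result together with standard continuity arguments.
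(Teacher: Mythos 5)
Your proposal follows the same route as the paper, which offers no proof of its own beyond Proposition~\ref{prop::inexact_DR} and the reformulation~\eqref{eq::fixed_point}--\eqref{eq::global_error}: the theorem is a direct invocation of Combettes' result for the Krasnosel'skii--Mann iteration with errors. Your step (b) is the one piece of content the paper leaves entirely implicit, and it is correct: from~\eqref{eq::global_error}, $\zeta^k=\tfrac12\bigl(\refl_{\rho H}(\refl_{\rho F}(z^k)+2e^k)-\refl_{\rho H}(\refl_{\rho F}(z^k))\bigr)$, so nonexpansiveness of $\refl_{\rho H}$ gives $\|\zeta^k\|\le\tfrac12\|2e^k\|=\|e^k\|$, and the summability hypothesis transfers verbatim to the operator-level perturbation. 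The only genuine weakness is in step (c): the hypotheses $\sum_k\eta^k\|e^k\|<\infty$ and $\sum_k\eta^k(2-\eta^k)=\infty$ force $\|e^k\|\to0$ only along a subsequence on which $\eta^k$ stays bounded away from zero (e.g.\ take $\eta^k$ tiny and $\|e^k\|$ large on the complementary indices), so the passage from $z^k\to z^\star$ to convergence of the \emph{full} multiplier sequence via $\lambda^{k+1}=\prox_{\rho F}(z^{k+1})+e^k$ is not justified as written; one either needs $e^k\to0$ as an additional assumption, or must settle for the conclusion $\prox_{\rho F}(z^k)\to\lambda^\star$. This gap is inherited from the theorem statement itself rather than introduced by you, but your parenthetical argument does not close it.
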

\begin{remark}
The sequence $\{\|e^k\|\}$ is bounded since $\|e^k\| \leq N\underset{1\leq i\leq N}{\max}\|g_i^k-\nabla f^{\gamma}_i(v_i^k)\|$. Since condition~\eqref{eq::VI} will be enforcing communication until feasibility is achieved, more query points will be generated and $\lim\limits_{j\rightarrow\infty}\bigcap_{j\in\mathcal{J}_i}\hat{\mathcal{G}}_i(v_i;z_{i,j})=\{\nabla f^{\gamma}_i(v_i)\}$. Since we do not know anything about the rate of convergence, we need to resort to a generic sequence
$\{\eta^k\}$ that would enforce convergence, \eg, $\eta^k=1/(k^2\|e^k\|)$. The quantity $\|e^k\|$ can be (over-)approximated by taking, \eg, the smallest ellipsoid of the ones that are intersected and compute its diameter.
\end{remark}
\begin{remark}
The convergence result presented above is only useful for analysis purposes, since in practice we would not
use a diminishing stepsize which typically results in very slow convergence.
\end{remark}

\section{Application: Optimal Exchange}{\label{sec::application}}
We consider the problem of cooperative tracking of a reference signal from a population of buildings.
The specific problem of interest, first introduced in~\cite{EPFL_Paolone}, is that of the so-called dispatchability of distribution feeders where the main target is to achieve virtually perfect dispatchability of a set of devices consisting of uncontrollable loads and distributed generation. The idea comprises two stages, namely a day-ahead prediction of a dispatch plan and real-time operation, broken further into a high-level and a faster low-level controller, both model-based. Our interest is to solve the high-level Model Predictive Control (MPC) problem that takes the form
\begin{equation}{\label{eq::Dispatch}}
\begin{aligned}
&{\text{minimize}}   && \sum_{i=1}^Nf^{\mathrm{cb}}_i(p^{\mathrm{cb}}_i,u_i,x_i)\\
&{\text{subject to}} && \sum_{t=1}^T\sum_{i=1}^N(p_i^\mathrm{cb}(t)-\hat{p}_i^\mathrm{cb}(t))=r(t)\enspace.
\end{aligned}
\end{equation}
The variable $p^{\mathrm{cb}}_i(t)$ refers to the total amount (electrical equivalent) of the thermal consumption of the $i^{\mathrm{th}}$ controllable building (CB) at time instant $t$, where $i=1,\ldots,N$, and $p^{\mathrm{cb}}_i=(p^{\mathrm{cb}}_i(0),\ldots,p^{\mathrm{cb}}_i(T-1))\in\reals^T$, $p^{\mathrm{cb}}=(p^{\mathrm{cb}}_1,\ldots,p^{\mathrm{cb}}_N)\in\reals^{NT}$. The reference power profile is denoted by $r(t)$, while time spans from $t=0,\ldots,T-1$, \ie, we have a $T$-timesteps ahead prediction of the power profile. The buildings can participate in the ancillary service market by increasing or decreasing their consumption with respect to some baseline power profile $\hat{p}_i^\mathrm{cb}$.
Finally, the equality constraint enforces that the total power contribution equates the reference power profile. The individual components $f_i(p^{\mathrm{cb}}_i,u_i,x_i)$ are local performance criteria coupled with implicit descriptions of convex sets, constructed by the intersection of linear equations (agent dynamics) and constraints, details that are hidden from the global node. More details are given below.

\subsection{Modeling of the agents}
The microgrid comprises small and medium office buildings, generated by the OpenBuild software~\cite{Gorecki2015OpenBuildA}. The buildings are described as linear dynamical systems, the input to which is the thermal heat $(kW)$ that is entering or leaving each zone, while the output is the temperature at each zone $({}^\circ C)$. The energy conversion systems (electrical to thermal) are modeled as a static map, which is represented by a constant coefficient of performance (COP). An individual building seeks to contribute to the tracking objective while respecting temperature as well as operational constraints. An instance of a local optimization problem for building $i$ is
\begin{align}{\label{eq::BuildingProgram1}}
f_i(p^{\mathrm{cb}}_i,u_i,x_i) = w\sum_{t=0}^{T-1}\max\Big\{\nonumber& c_i(t)(p^{\mathrm{cb}}_i(t)-\hat{p}^{\mathrm{cb}}_i(t)),\\
         \nonumber& 0.5c_i(t)(p^{\mathrm{cb}}_i(t)-\hat{p}^{\mathrm{cb}}_i(t)),\\
         \nonumber& -0.5c_i(t)(p^{\mathrm{cb}}_i(t)+3\hat{p}^{\mathrm{cb}}_i(t))\Big\}\\
         & + \delta((p^{\mathrm{cb}}_i,u_i,x_i)\;|\;\mathcal{C}_{\mathrm{build}})\enspace,
\end{align}
\[
\mathcal{C}_{\mathrm{build}} =
\left \{
                  \begin{array}{ll}
                    x_i(t+1) = A_ix_i(t)+B_iu_i(t) \\
                    x_i(0) = x_i^\mathrm{init} \\
                    p^{\mathrm{cb}}_i(t) = \sum_{j=1}^{M_i}u_{ij}(t) \\
                    C_ix_i(t) \in X_i(t)  \\
                    \|u_i(t)\|_\infty \leq u_i^\mathrm{max}
                  \end{array}
\right\}\enspace,
\]
with $x_i\in\reals^{n_iT},u_i=\{u_{ij}\}_{j=1}^{M_i}\in\reals^{M_iT}$, where $x_i(t)$ are states and $M_i$ is the number of zones of the building.
The term $w>0$ weighs the local objective versus the global tracking cost.
The linear mapping of the states $C_ix_i(t)$ corresponds to zone temperatures that are confined within desired limits. In addition to feasibility, the buildings have certain preferences when it comes to tracking a given
power profile. When the profile causes the total power consumption to exceed some threshold, the cost of electricity
increases, while when the total power consumption drops below some threshold the ambient
temperature might become discomforting (although tolerable) for the residents. This discontent is expressed
in~\eqref{eq::BuildingProgram1} by
the piecewise affine function expressed by the $\mathrm{\max}$ term in the objective. The data $c_i(t)$ is the associated electricity cost for bulding $i$ at time $t$ and $\hat{p}_i^\mathrm{cb}$ is the expected
baseline consumption, the deviation from which is penalized or subsidized.

\begin{table}
\centering
\scalebox{0.55}{
\Large
\begin{tabular}{| l l l c |}
\hline
& & &\\
\multicolumn{4}{| l |}{\textbf{Simulation characteristics}} \\
& & &\\
Data & \multicolumn{2}{ l }{ $1^{\mathrm{st}}$ January 2013} & \\
Location & \multicolumn{2}{ l }{ Lausanne} & \\
Time & \multicolumn{2}{ l }{00:00 - 23:00} & \\
Sampling time & \multicolumn{2}{ l }{60} & $\mathrm{min}$ \\
Horizon & \multicolumn{2}{ l }{24} & $\mathrm{-}$ \\
Dimension $\mathcal{Z}_i$ & \multicolumn{2}{ l }{96} & $\mathrm{-}$ \\
& & &\\

\multicolumn{4}{| l |}{\textbf{Buildings}} \\
& & &\\
Desired temperature & \multicolumn{2}{ l }{20} & $\mathrm{^\circ C}$ \\
Minimum temperature (day/night) & \multicolumn{2}{ l }{18/15} & $\mathrm{^\circ C}$ \\
Maximum temperature (day/night) & \multicolumn{2}{ l }{22/25} & $\mathrm{^\circ C}$ \\
Tariff (day/night) & 21.6/12.7 & 13.15/8.3 & $\mathrm{ct./kWh}$ \\
Heat pump $\mathrm{COP}$ & \multicolumn{2}{ l }{3.0} & $\mathrm{-}$ \\
& & &\\

\multicolumn{4}{| l |}{} \\
& \textbf{Small}&\textbf{Medium} &\\
Number of systems & 28 & 22 & $\mathrm{\#}$ \\
Area & 511 & 4982 & $\mathrm{m^2}$\\
Number of states & 15 & 54 & $\mathrm{-}$\\
Number of inputs & 5 & 18 & $\mathrm{-}$\\
Average thermal consumption & 4 & 40 & $\mathrm{W/m^2}$\\
& & &\\
\hline
\end{tabular}}
\caption{Micro-grid case study overview}
\label{table::tab_micro-grid}
\end{table}
Instead of carrying the full building description as in $\mathcal{C}_{\mathrm{build}}$, which would result
in a high-dimensional optimization problem, the authors in~\cite{building2battery} propose a low-dimensional
modeling abstraction of the building as a `thermal battery'. A robust optimization problem is solved to
ensure that the trackable power profiles $p^{\mathrm{cb}}_i(t)=\sum_{j=1}^{M_i}u_{ij}(t)\in\reals^T$ for building $i$
reside inside a convex set, namely $\mathcal{Z}_i$, and satisfy the constraints imposed in~\eqref{eq::BuildingProgram1}.

Using the aforementioned abstraction, the optimization problem associated to building $i$ becomes
\begin{align}{\label{eq::BuildingProgram2}}
f_i(p^{\mathrm{cb}}_i) = w\sum_{t=0}^{T-1}\max\Big\{\nonumber& c_i(t)(p^{\mathrm{cb}}_i(t)-\hat{p}^{\mathrm{cb}}_i(t)),\\
         \nonumber& 0.5c_i(t)(p^{\mathrm{cb}}_i(t)-\hat{p}^{\mathrm{cb}}_i(t)),\\
         \nonumber& -0.5c_i(t)(p^{\mathrm{cb}}_i(t)+3\hat{p}^{\mathrm{cb}}_i(t))\Big\}\\
         & + \delta(p^{\mathrm{cb}}_i\mid\mathcal{Z}_i)\enspace,
\end{align}

As a final step, we perform the simple reformulation suggested in~\cite[Section~7.3.2]{admm_distr_stats}, and the dispatchability problem~\eqref{eq::Dispatch} reduces to the \emph{optimal exchange} problem given below.
\begin{equation}   \label{eq::optimal_exchange}
  \begin{array}{ll}
    z_i^{k+1}       &= \underset{z_i}{\argmin}\left\{f_i(z_i) - \langle z_i,\lambda^k\rangle + \frac{\rho}{2}\|z_i-(z_i^k-\bar{z}^k)\|_2^2\right\}, \\
                    & i=1,\ldots,N+1 \\
    \lambda^{k+1}   &= \lambda^{k} + \rho (\bar{z}^{k+1} - r/N)\enspace,
  \end{array}
\end{equation}
where we have denoted $z_i=p^{\mathrm{cb}}_i,\;i=1,\ldots,N$, $z_{N+1}=p^\mathrm{bess}$, $f_i=f^{\mathrm{cb}}_i,\;i=1,\ldots,N$, $f_{N+1}=f^\mathrm{bess}$ while $\bar{z}=\frac{1}{N+1}\sum_{i=1}^{N+1}z_i$.

We can now solve~\eqref{eq::Dispatch} with $f_i$ given by~\eqref{eq::BuildingProgram2} using Algorithm~\ref{al::algo}.

\subsection{Simulation setup}
Our purpose is to assign one-hour reference tracking to a population that consists of agents described in the previous section. We consider $N=50$ buildings, different electricity tariffs that vary according to the time of the day as well as the size of the load. The tracking term $w$ in~\eqref{eq::BuildingProgram2} is set to $0.05$ in order to prioritize tracking over the agents' local objectives. Other details associated to the simulation are given in Table~\ref{table::tab_micro-grid}.

We solve problem~\eqref{eq::BuildingProgram2} using ADMM~\eqref{eq::admm}, which simplifies to~\eqref{eq::optimal_exchange} for $\eta^k=\eta=1$ with $\rho=20$. A comparison is performed between the exact solution of the problem instance, and the proposed reduced communication approach presented in Algorithm~\ref{al::algo}.

The proximal minimization problems are solved in MATLAB using the YALMIP optimizer~\cite{YALMIP} with the Gurobi solver. For demonstration purposes, the termination condition measures the distance between the iterate and the optimizer, namely $\|z^k-z^\ast\|/\|z^\ast\|\leq10^{-3}$. In a practical implementation an optimizer is, of course, unknown. The algorithm would terminate by means of a communication test, namely, all the agents would be communicated at some algorithmic iteration and the distance to the actual proximal minimizers would be measured. The termination condition could be designed based on the aforementioned distance. 

Figure~\ref{fig::allpoints} depicts the number of iterations versus the number of communication rounds for the residual being computed as $\|z^k-z^\ast\|/\|z^\ast\|$. We observe that Algorithm~\ref{al::algo} achieves approximately 46\% reduction in communication in comparison to the regular ADMM implementation. The number of iterations is, as expected, increased (from 160 to 247). 

The tracking quality of the mix is depicted in Figure~\ref{fig::tracking}. The small buildings' contribution is colored in pink, while that of the medium scale buildings in blue. The reference signal in red has been normalized with respect to the baseline.

The fact that 87 communication rounds were needed to reach the desired accuracy, as depicted in Figure~\ref{fig::allpoints}, translates into solving a QCQP given by equation~\eqref{eq::qcqp} with 87 quadratic constraints per agent. This results in some considerable computational effort. Inspired by \emph{cutting plane methods with discarded hyperplanes} (see~\cite{Kiwiel1983},~\cite[Section~5.3.2]{bertsekas_cvx_algos}), we perform a similar technique in a heuristic sense, where we only keep a fixed number ($M$) of generated query points $z_{i,j}$ for each agent $i$, where now $j$ resides in a subset of $\mathcal{J}$. We keep a fixed number of the \emph{latest} generated gradients $\nabla f_i^\gamma(z_{i,j}),\;i=1,\ldots,N$, arguing that the most recently-generated points give rise to a better (local) polyhedral approximation of $f_i^\gamma$ around the current point $v_i^k$, and subsequently solve~\eqref{eq::qcqp}. The results are summarized in Table~\ref{table::numerics}, where the performance of the (limited-memory) algorithm is compared to the full-memory instance for several choices of $M$. It is observed that even a small number of recent query points is sufficient to achieve that same performance as keeping all the points ($M=5,10$ in the table). When the number of query points is decreased significantly ($M=1$), the number of iterations needed for convergence is increased considerably, leading to an increase in the number of communication rounds. Finally, the average solve time for the problem~\eqref{eq::qcqp} is stated in the last row of the table. The benefit of using a limited-memory approach is evident. It is also observed that there is no significant difference when the number of generated points is small ($M=1,5,10$). The reason is that the bulk of the computation time is attributed to the parsing phase, while the solve time itself is negligible. This observation advocates for the use of a small, but yet adequate, number of points in the formulation.

To sum up, it is observed that when the proximal solution is estimated,
the communication rounds are significantly reduced, by more than $46\%$. This reduction is accompanied by an increase in the number of iterations
needed for convergence. The cause for this increase can be attributed to the excursions that the trajectory takes, most probably caused by the erroneous optimizers that are selected in the early iterations. We also see that limited-memory approaches that originate from discarding query points perform very well in practice, and cause an almost negligible deterioration in the performance.

\begin{table}
\centering
\scalebox{0.55}{
\Large
\begin{tabular}{ |l|l|l|l|l||l| }
\hline
\multicolumn{6}{ |c| }{Optimal Exchange} \\
\hline
  & \multicolumn{4}{|c|}{\textbf{Estimate}} & \textbf{Exact} \\ \hline
 Hyperplanes $M$  & $1$ & $5$ & $10$ & all & -\\ \hline
 \#  iterations & $358$  & $256$ & $254$ & $247$ & $160$\\ \hline
 Communications (per agent) & $163$  & $92$ & $92$ & $87$ & $160$\\ \hline
 \# constraints (per agent) & $1$  & $5$ & $10$ & $87$ & -\\ \hline
 Average computation time (per agent) & $\sim 1\mathrm{s}$  & $\sim 1\mathrm{s}$ & $\sim 1\mathrm{s}$ & $\sim 7\mathrm{s}$ & -\\ \hline
\end{tabular}}
\caption{Numerical comparison among limited-memory and full-memory versions of Algorithm~\ref{al::algo}.}
\label{table::numerics}
\end{table}

\begin{figure}
    \centering
    \includegraphics[scale=0.22]{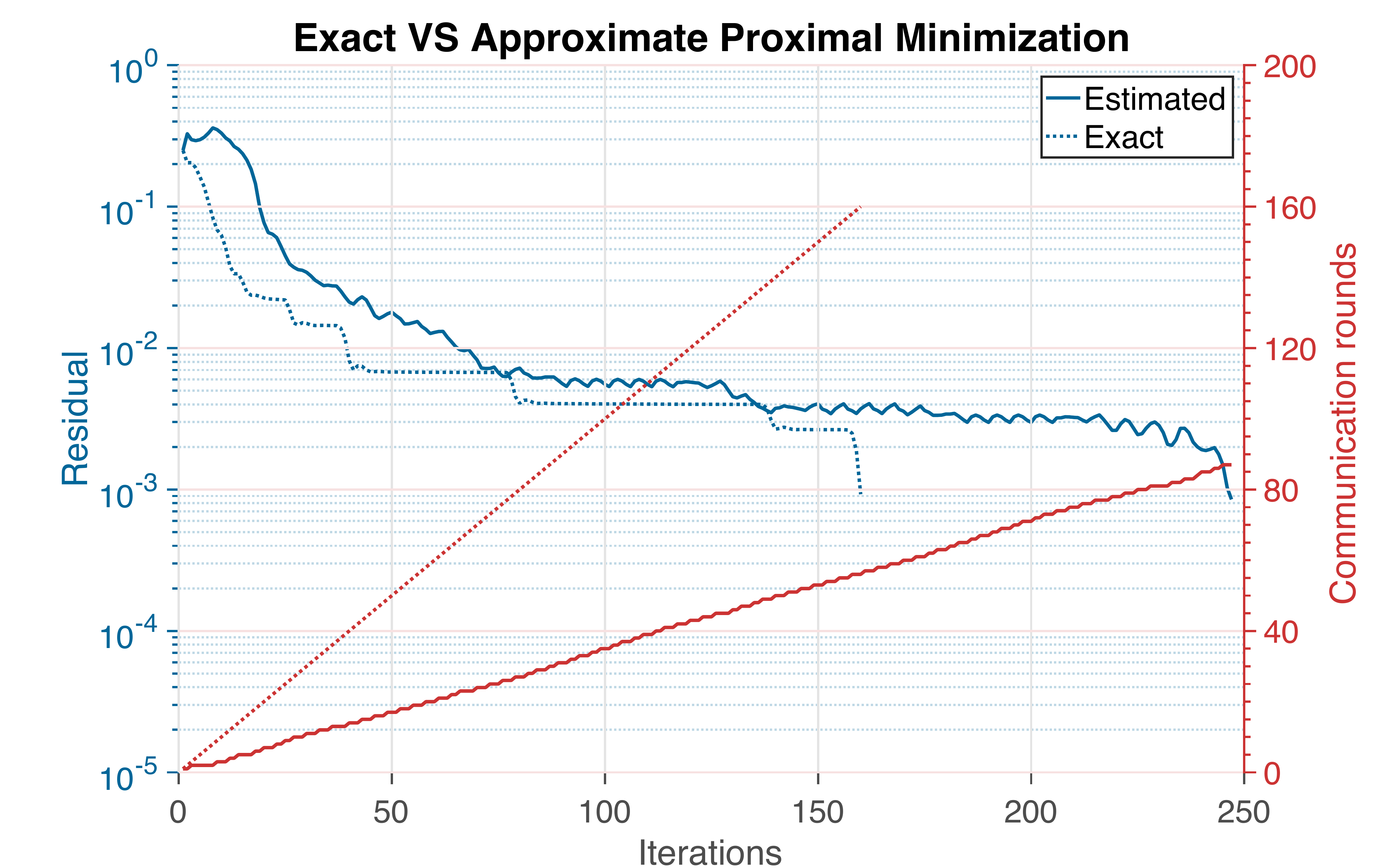}
	\caption{Performance in terms of communication savings.}
	\label{fig::allpoints}
\end{figure}

\begin{figure}
        \centering
        \includegraphics[scale=0.47]{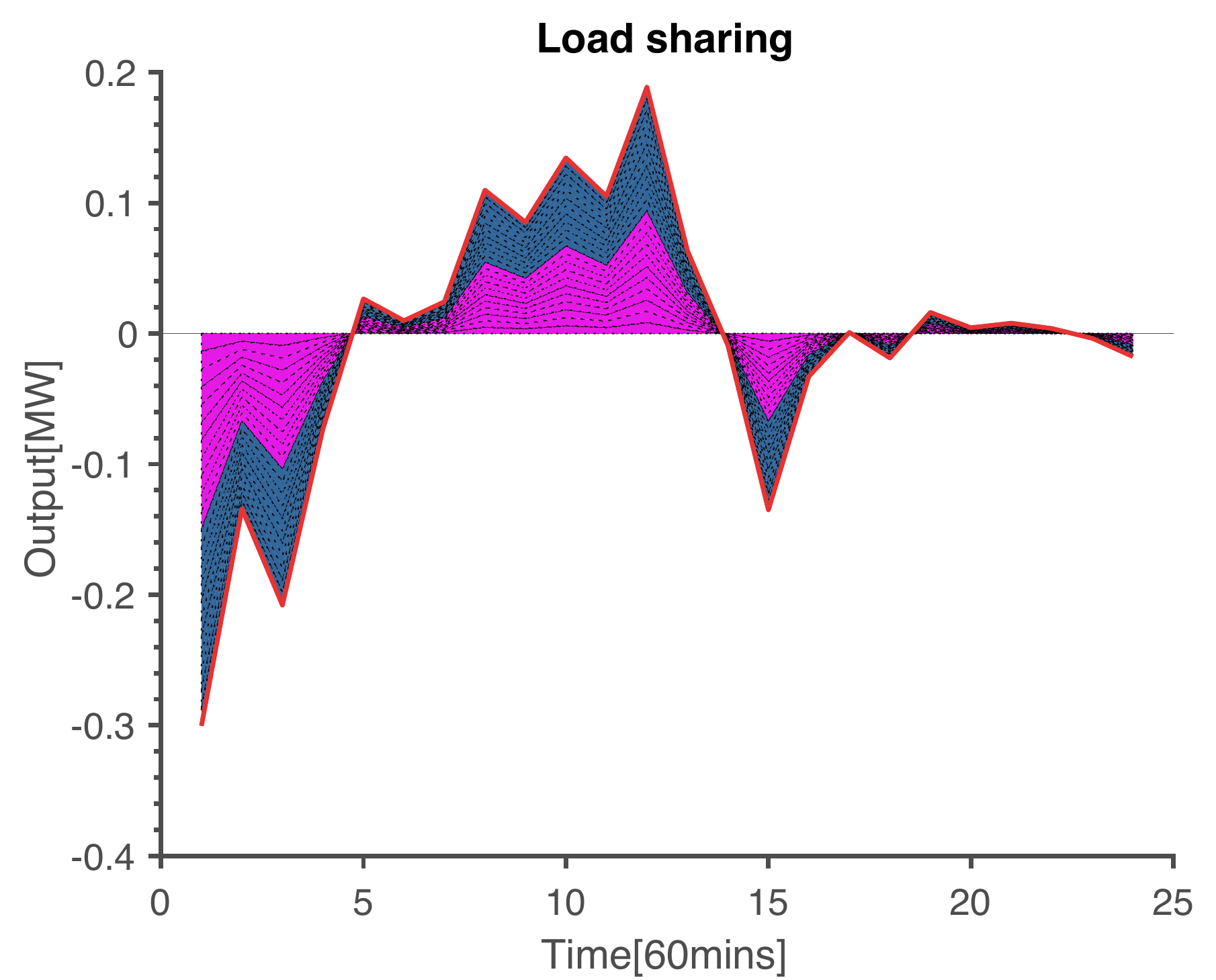}
        \caption{Tracking of a reference by a mix of small (pink) and medium scale (blue) buildings.}
        \label{fig::tracking}
\end{figure}

\section{Conclusion}
Modern multi-agent setups consist of several heterogeneous components equipped with prediction and control algorithms, that attribute to them some decision making capacity. These attributes ask for distributed solutions which come with an inherent communication overhead. We propose a framework where the communication requests are reduced by enabling the central coordinator to gradually `learn' the optimization model of the agents and triggers them based on the result of a predesigned certification test.

A future goal is to generalize the scheme to decentralized settings, for instance, to wireless sensor networks where
only one-hop communication is allowable. Such networks are characterized by high energy consumption related to communication, often several orders of magnitude greater than the energy required for local computations~\cite{1307319}.
As a result, our proposed approach could become of practical relevance.


\bibliographystyle{ieeetr}
\bibliography{IEEEabrv,cones_biblio}

\appendices
\section{Proof of Proposition~\ref{prop::eps_subdiff_1}}{\label{app::1}}
Without loss of generality we derive the analysis for the query point $z_1$ to be consistent with~Figure~\ref{fig::sketch1}.

\begin{enumerate}
\item
We need to show that $\nabla f^\gamma(v)\in\partial_{\epsilon^\ast(v)}\overline{f}^\gamma(v;z_1)$.
Employing Definition~\ref{def::eps_subdiff}, we start by showing that
\[
\overline{f}^\gamma(z;z_1) \geq \overline{f}^\gamma(v;z_1)+\langle \nabla f^\gamma(v), z-v\rangle - \epsilon^\ast(v;z_1),\; \forall z\in\reals^n\enspace.
\]

We will proceed and prove the statement by contradiction. Suppose that  $\exists\;z\in\reals^n$, such that
\[
\overline{f}^\gamma(z;z_1) < \overline{f}^\gamma(v;z_1)+\langle \nabla f^\gamma(v), z-v\rangle - \epsilon^\ast(v;z_1)
\]
\emph{holds}.
From the definition of $\epsilon^\ast(v;z_1)$ (equation~\eqref{eq::epsilon}), the above inequality is equivalent to:
\begin{equation*}
\overline{f}^\gamma(z;z_1) < \langle \nabla f^\gamma(v), z-v\rangle + f^\gamma(v)\enspace.
\end{equation*}
From convexity of $f^\gamma$, it holds that $f^\gamma(v) + \langle \nabla f^\gamma(v), z-v\rangle \leq  f^\gamma(z),\; \forall z\in\reals^n$, therefore
we conclude that $\exists\;z\in\reals^n$ such that
\begin{equation*}
\overline{f}^\gamma(z;z_1) < f^\gamma(z)\enspace,
\end{equation*}
which leads to a contradiction.
Consequently, $\nabla f^\gamma(v)\in\partial_{\epsilon^\ast(v;z_1)}\overline{f}^\gamma(v;z_1)$.

\item The property follows directly from the fact that
 $\overline{f}^\gamma(z;z_1)$ is real-valued, along with~\cite[Proposition~6.7.1]{bertsekas_cvx_algos}.
\end{enumerate}


\section{Proof of Proposition~\ref{prop::eps_subdiff_smallest}}{\label{app::2}}
All we need in order to prove the statement is a function $f$ for which $\epsilon^\ast(v;z_1)$ results in a tight bound.
Let us assume that $f(x)$ is an indicator function of some convex set $\mathcal{X}$ and thus given by
$f(x)=\delta(x\mid\mathcal{X})$. Let us also assume that the set~\eqref{eq::eps_subdiff_2} associated to $\epsilon^\ast(v;z_1)$ is not the smallest,
\ie, $\exists\;\epsilon>0$ such that
$\epsilon = \overline{f}^\gamma(v;z_1) - f^\gamma(v) - \alpha$
for some $\alpha>0$, and that $\nabla f^\gamma(v)\in\partial_{\epsilon}\overline{f}^\gamma(v;z_1)$, or
\[
\overline{f}^\gamma(z;z_1) \geq \overline{f}^\gamma(v;z_1)+\langle \nabla f^\gamma(v), z-v\rangle - \epsilon,\; \forall z\in\reals^n\enspace.
\]
Substituting $\epsilon$ from above, we have that
\begin{equation}{\label{eq::tightest}}
\overline{f}^\gamma(z;z_1) \geq {f}^\gamma(v)+\langle \nabla f^\gamma(v), z-v\rangle + \alpha,\; \forall z\in\reals^n\enspace.
\end{equation}
For $z=z_1$, equation~\eqref{eq::tightest} reads
\[
f^\gamma(z_1) \geq {f}^\gamma(v)+\langle \nabla f^\gamma(v), z_1-v\rangle + \alpha\enspace.
\]
If $z_1\in\mathcal{X}$, \ie, $z_1\in\dom(f)$, it holds $\forall v\in\mathcal{X}$ that
$f^\gamma(z_1)=f^\gamma(v)=0$ and $\nabla f^\gamma(v)=0$. Consequently, the previous inequality results in
$\alpha\leq 0$, which leads to a contradiction. Therefore, there exists no set $\partial_{\epsilon}\overline{f}^\gamma(v;z_1)$
smaller than $\mathcal{G}(v;z_1)$ that contains $\nabla f^\gamma(v)$.

\section{Proof of Theorem~\ref{thm::main}}{\label{app::3}}
We need to compute explicitly the $\epsilon$-subdifferential sets of both $\overline{f}^\gamma(v)$ and $\overline{f}^\gamma(v;z_1)$
and subsequently perform the comparison. We will need the following result from~\cite{Rockafellar70convexanalysis}:
\begin{lemma}\cite[Theorem~16.5]{Rockafellar70convexanalysis}{\label{thm::rocka_convex_hull_conjugate}}
The conjugate of $\overline{f}^\gamma(z\mid\mathcal{J})$ is given by
\[
\scalebox{0.93}{$(\overline{f}^\gamma)^\star(g\mid\mathcal{J}) = (\conv\{\overline{f}^\gamma(z;z_j)\mid j\in\mathcal{J}\})^\star = \underset{j\in\mathcal{J}}{\sup}\{(\overline{f}^\gamma)^\star(g;z_j)\}$}.
\]
\end{lemma}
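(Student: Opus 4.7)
The plan is to compute the conjugate of $\overline{f}^\gamma(\cdot\mid\mathcal{J})$ directly from the infimal representation of the convex hull given in the excerpt, dualize, and then exploit the vertex structure of the probability simplex.

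First, starting from
\[
\overline{f}^\gamma(z\mid\mathcal{J}) = \inf\Bigl\{\sum_{j\in\mathcal{J}}\theta_j\,\overline{f}^\gamma(x_j;z_j) \,\Big|\, \sum_{j}\theta_j x_j=z,\; \theta_j\geq 0,\; \sum_{j}\theta_j=1\Bigr\},
\]
I substitute into the definition of the conjugate,
\[
(\overline{f}^\gamma)^\star(g\mid\mathcal{J}) = \sup_{z\in\reals^n}\bigl\{\langle g,z\rangle - \overline{f}^\gamma(z\mid\mathcal{J})\bigr\}.
\]
Using $-\inf = \sup(-\cdot)$, the negation of the inner infimum becomes a supremum over the admissible tuples $(\theta,\{x_j\})$; the outer $\sup_z$ is then absorbed by eliminating the equality constraint $z=\sum_{j}\theta_j x_j$. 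This yields
\[
(\overline{f}^\gamma)^\star(g\mid\mathcal{J}) = \sup_{\theta\in\Delta,\,\{x_j\}\subset\reals^n}\sum_{j\in\mathcal{J}}\theta_j\bigl(\langle g, x_j\rangle - \overline{f}^\gamma(x_j;z_j)\bigr),
\]
where $\Delta$ denotes the probability simplex indexed by $\mathcal{J}$.

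Next, each summand involves only a single decision variable $x_j$, so the supremum separates across indices and produces, by the definition of the conjugate of each quadratic upper bound,
\[
\sup_{\theta\in\Delta}\sum_{j\in\mathcal{J}}\theta_j\,(\overline{f}^\gamma)^\star(g;z_j).
\]
Since the objective is linear in $\theta$ over the compact polytope $\Delta$, the maximum is attained at an extreme point $e_{j^\star}$ of $\Delta$, giving $\sup_{j\in\mathcal{J}}(\overline{f}^\gamma)^\star(g;z_j)$, which is precisely the claimed identity.

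The main technical care lies in the first step, namely the interchange that fuses the outer $\sup_z$ with the negated inner infimum into a single joint supremum over $(\theta,\{x_j\})$ with $z$ substituted out. Because each $\overline{f}^\gamma(\cdot;z_j)$ is a proper closed convex quadratic, its conjugate is everywhere finite, so no indeterminate $+\infty-\infty$ expressions arise; equivalently, the convex hull of a finite family of proper convex functions admits the above infimum representation without needing an extra closure operation, cf.~\cite[Theorem~5.6]{Rockafellar70convexanalysis}. Once this interchange is justified, the remaining separation and vertex arguments are purely algebraic.
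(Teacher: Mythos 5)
Your proof is correct, but it is worth noting that the paper does not actually prove this lemma: it simply imports it as \cite[Theorem~16.5]{Rockafellar70convexanalysis}, the general duality between the convex-hull and pointwise-supremum operations under conjugation. What you have written is a self-contained, elementary derivation of that cited fact for the finite family at hand: starting from the infimal representation of $\conv\{\overline{f}^\gamma(\cdot;z_j)\}$, fusing the outer supremum over $z$ with the negated inner infimum (a plain iterated-supremum identity, valid with no convexity or attainment assumptions), separating the joint supremum across the independent variables $x_j$ to produce $\sup_{\theta\in\Delta}\sum_j\theta_j(\overline{f}^\gamma)^\star(g;z_j)$, and finally observing that a linear functional over the simplex is maximized at a vertex. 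Your attention to the one genuinely delicate point is well placed: because each $\overline{f}^\gamma(\cdot;z_j)$ is a real-valued strongly convex quadratic, every conjugate $(\overline{f}^\gamma)^\star(\cdot;z_j)$ is finite, so the terms with $\theta_j=0$ contribute zero unambiguously and no closure operation is needed, whereas Rockafellar's general statement must handle improper values and closures. The trade-off is the usual one: the citation is shorter and covers arbitrary proper convex families, while your argument is transparent, uses only the definition of the conjugate, and makes explicit why the supremum over the simplex collapses to a supremum over the index set.
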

\begin{itemize}
\item Let us fix the distance of $f^\gamma(v)$ from the quadratic upper bounds as demonstrated in Figure~\ref{fig::sketch2}. The $\epsilon^\ast(v;z_j)$-subdifferential sets for $\overline{f}^\gamma(v;z_j),\;j\in\mathcal{J}$ are:
\begin{align}{\label{eq::thm_proof_2}}
 \partial_{\epsilon^\ast(v;z_j)}\overline{f}^\gamma(v;z_j) = &\nonumber\Big\{g\mid(\overline{f}_v^\gamma)^\star(g;z_j)\leq\epsilon^\ast(v;z_j)\Big\} \\
                                            = &\nonumber\Big\{g\mid(\overline{f}^\gamma)^\star(g;z_j)+\overline{f}^\gamma(v;z_j)\\
                                            &\qquad-\langle v,g\rangle\leq\epsilon^\ast(v;z_j)\Big\} \enspace,
\end{align}
where we used relation~\eqref{eq::translate_func_epi} in the first equality and \eqref{eq::translate_func_conj}
in the second.

\item For $\overline{f}^\gamma(v)$:
\begin{equation}{\label{eq::thm_proof_1a}}
\begin{aligned}
 \partial_{\epsilon^\ast(v\mid\mathcal{J})}\overline{f}^\gamma(v\mid\mathcal{J}) &= \Big\{g\mid(\overline{f}_v^\gamma)^\star(g\mid\mathcal{J})\leq\epsilon^\ast(v\mid\mathcal{J})\Big\}  \\
                                            &= \Big\{g\mid(\overline{f}^\gamma)^\star(g\mid\mathcal{J})+\overline{f}^\gamma(v\mid\mathcal{J})\\
                                            &\qquad -\langle v,g\rangle\leq\epsilon^\ast(v\mid\mathcal{J})\Big\} \\
                                            &= \Big\{g\mid(\underset{j\in\mathcal{J}}{\sup}\{(\overline{f}^\gamma)^\star(g;z_j)\}+\overline{f}^\gamma(v\mid\mathcal{J}) \\
                                            &\qquad -\langle v,g\rangle\leq\epsilon^\ast(v\mid\mathcal{J})\Big\}\enspace,
\end{aligned}
\end{equation}

where we used relation~\eqref{eq::translate_func_epi} in the first equality, \eqref{eq::translate_func_conj}
in the second, and Lemma~\ref{thm::rocka_convex_hull_conjugate} in the third.
\end{itemize}
Equation~\eqref{eq::thm_proof_1a} holds for each $(\overline{f}^\gamma)^\star(g;z_j)$ individually since it holds for the pointwise supremum.
In addition, it is evident from Figure~\ref{fig::sketch2} that $\epsilon^\ast(v\mid\mathcal{J})$, expressing the distance of $\overline{f}^\gamma(v\mid\mathcal{J})$ from the Moreau envelope $f^\gamma(v)$, can be expressed with respect to any of the distances $\epsilon^\ast(v;z_j)$, reduced by a factor of $\overline{f}^\gamma(v;z_j)-\overline{f}^\gamma(v\mid\mathcal{J})$.
Equation~\eqref{eq::thm_proof_1a} thus ends up reading
\begin{equation*}
\begin{aligned}
\partial_{\epsilon^\ast(v\mid\mathcal{J})}\overline{f}^\gamma(v\mid\mathcal{J}) &= \Big\{g\mid(\overline{f}^\gamma)^\star(g;z_j)+\overline{f}^\gamma(v;z_j)\\
                                                         &\qquad-\langle v,g\rangle\leq\epsilon^\ast(v;z_j),\;\forall j\in\mathcal{J}\Big\}\enspace,
\end{aligned}
\end{equation*}
so $\partial_{\epsilon^\ast(v\mid\mathcal{J})}\overline{f}^\gamma(v\mid\mathcal{J})=\bigcap_{j\in\mathcal{J}}\partial_{\epsilon^\ast(v;z_j)}\overline{f}^\gamma(v;z_j)$ from~\eqref{eq::thm_proof_2}.

\section{Proof of Proposition~\ref{prop::inexact_DR}}{\label{app::4}}
We start by analyzing the ADMM iterations in~\eqref{eq::admm}. The proximal and dual iterations can be, respectively, expressed as
\begin{align}{\label{eq::error}}
   \hat{y}_i^{k+1} \nonumber&= v_{i}^{k} - \gamma g_{i}^{k} \\
                   \nonumber&= v_{i}^{k} - \gamma\nabla f_i^\gamma(v_{i}^{k}) - \gamma e_i^k\\
                   \nonumber&= \underbrace{\prox{_{\frac{1}{\rho} f_i}}(v_i^k)}_{y_i^{k+1}} - (1/\rho) e_i^k\\
   \hat{\lambda}_i^{k+1} \nonumber&= \lambda_i^k + \rho(\tilde{x}_i^{k+1}-\hat{y}_i^{k+1})\\
                                  &= \underbrace{\lambda_i^k + \rho(\tilde{x}_i^{k+1}-y_i^{k+1})}_{\lambda_i^{k+1}} + e_i^k              \enspace.
\end{align}
Let us introduce the variable $\mu_i^k:=\lambda_i^k+\rho \tilde{x}_i^{k+1}$ and write the dual update as
\begin{align}{\label{eq::derivation1_inexact_DR}}
\lambda_i^{k+1} \nonumber&= \mu_i^k-\rho\prox{_{\frac{1}{\rho} f_i}}(v_i^k)\\
                &= \mu_i^k-\rho\prox{_{\frac{1}{\rho} f_i}}(\mu_i^k/\rho)\enspace,
\end{align}
where the second equality follows from~\eqref{eq::prox2MoreauGrad}.

We still need to express the error in~\eqref{eq::error} with respect to the functions appearing
in~\eqref{eq::dr}.
The Moreau identity is a useful Lemma that associates a convex function with its
conjugate and is given below.
\begin{lemma}{\label{lem::Moreau}}
 Let $f:\reals^n\mapsto\reals\cup\{+\infty\}$ be a closed proper convex function. Then for any $x\in\reals^n$
 \[
  \prox{_{\rho f^{\ast}}}(x) + \rho \prox{_{f/\rho}}(x/\rho) = x, \;\; \forall\; 0 < \rho < +\infty\enspace.
 \]
\end{lemma}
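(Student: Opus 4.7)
The plan is to establish the Moreau identity in two stages: first prove the classical unparameterized decomposition $x = \prox_f(x) + \prox_{f^\star}(x)$, then promote it to the $\rho$-scaled form stated in the lemma by a change-of-variables argument.

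For the base case, I would start from the characterization of the proximal operator via its optimality condition. Since $\prox_f(x)$ minimizes the strongly convex function $u \mapsto f(u) + (1/2)\|u - x\|^2$, we have $u = \prox_f(x) \iff 0 \in \partial f(u) + (u - x) \iff x - u \in \partial f(u)$. Applying the standard Fenchel–Young duality identity for closed proper convex $f$, namely $p \in \partial f(q) \iff q \in \partial f^\star(p)$, with $q = u$ and $p = x - u$, yields $u \in \partial f^\star(x - u)$. Setting $w := x - u$, this reads $x - w \in \partial f^\star(w)$, which is precisely the optimality condition for $w = \prox_{f^\star}(x)$. Hence $\prox_f(x) + \prox_{f^\star}(x) = u + w = x$.

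Next I would apply this base identity to the function $g := \rho f^\star$. Its conjugate is $g^\star(x) = (\rho f^\star)^\star(x) = \rho (f^\star)^\star(x/\rho) = \rho f(x/\rho)$, where I used the positive-scaling rule for conjugates together with the biconjugate theorem (which applies because $f$ is closed proper convex). The base Moreau identity then gives
\begin{equation*}
\prox_{\rho f^\star}(x) + \prox_{\rho f(\cdot/\rho)}(x) = x.
\end{equation*}
It remains to identify $\prox_{\rho f(\cdot/\rho)}(x)$ with $\rho\,\prox_{f/\rho}(x/\rho)$. Unfolding the definition, $\prox_{\rho f(\cdot/\rho)}(x) = \argmin_u \{\rho f(u/\rho) + (1/2)\|u-x\|^2\}$; performing the substitution $u = \rho v$ and dividing the objective by $\rho^2$ (which does not change the minimizer) recasts the problem as $\rho \cdot \argmin_v\{f(v)/\rho + (1/2)\|v - x/\rho\|^2\} = \rho\,\prox_{f/\rho}(x/\rho)$. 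Substituting back yields the stated identity $\prox_{\rho f^\star}(x) + \rho\,\prox_{f/\rho}(x/\rho) = x$.

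The bulk of the argument is routine once the base case is in hand; the only step that requires care is the scaling bookkeeping in the change of variables $u = \rho v$, where one must track the two powers of $\rho$ coming from the quadratic penalty and the one coming from the scalar multiplier to land on exactly $\prox_{f/\rho}(x/\rho)$ rather than, say, $\prox_{\rho f}(x/\rho)$. A slicker alternative would be to invoke the conjugate-scaling rule directly on $f/\rho$ instead and apply the base identity symmetrically, but either route reaches the same conclusion.
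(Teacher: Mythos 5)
Your proof is correct. Note that the paper itself offers no proof of Lemma~\ref{lem::Moreau}: it is stated as a standard fact (the Moreau identity) and immediately used in Appendix~\ref{app::4}, so your argument is supplying a derivation the authors omit rather than paralleling one of theirs. The route you take is the canonical one and it is sound: the base decomposition $x=\prox_{f}(x)+\prox_{f^\star}(x)$ follows from inverting the optimality condition $x-u\in\partial f(u)$ through the Fenchel--Young equivalence $p\in\partial f(q)\iff q\in\partial f^\star(p)$ (which indeed needs $f$ closed proper convex so that $f^{\star\star}=f$), and the $\rho$-scaled form then drops out of applying the base case to $g=\rho f^\star$ together with the conjugate scaling rule $(\rho f^\star)^\star(x)=\rho f(x/\rho)$. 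Your change-of-variables bookkeeping in identifying $\prox_{\rho f(\cdot/\rho)}(x)$ with $\rho\,\prox_{f/\rho}(x/\rho)$ is also right; a quick sanity check with $f=\tfrac12\|\cdot\|^2$ gives $x/(1+\rho)+\rho x/(1+\rho)=x$ as expected. One small remark on notation: the paper's convention $\prox_{\gamma f}(z)=\argmin_x\{f(x)+\tfrac{1}{2\gamma}\|x-z\|^2\}$ and the ``scaled-function, unit-parameter'' reading of $\prox_{f/\rho}$ coincide here (multiplying the objective by $\rho$ does not move the minimizer), so no ambiguity arises, but it is worth being explicit about which convention you are unfolding when you write the $\argmin$.
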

Using Lemma~\ref{lem::Moreau} in~\eqref{eq::derivation1_inexact_DR}, it directly follows that
that $\lambda_i^{k+1}=\prox{_{\rho f_i^\star}}(\lambda_i^k+\rho \tilde{x}_i^{k+1})$. Substituting in~\eqref{eq::error},
we finally get that
\begin{equation}{\label{eq::lambda_approx}}
\hat{\lambda}_i^{k+1} = \prox{_{\rho f_i^\star}}(\lambda_i^k+\rho \tilde{x}_i^{k+1}) + e_i^k\enspace.
\end{equation}
It follows from the definition of $F(\lambda)$ and~\eqref{eq::lambda_approx} that Algorithm~\ref{al::algo} can be
equivalently cast as the inexact DR iteration provided that $z^{k+1}=\lambda^k+\rho \tilde{x}^{k+1}$. The proof of this last point follows directly from the slides~\cite[Lecture~13]{EE_236C}.

By switching the second and third updates, the DR iteration~\eqref{eq::dr} can be written as
\begin{align} {\label{eq::dr2}}
    r^{k+1}      \nonumber&= \prox{_{\rho H}}(2\lambda^k-z^k) \\
    \lambda^{k+1} \nonumber&= \prox{_{\rho F}}(z^k+\eta(r^{k+1}-\lambda^k)) \\
    z^{k+1}       &= z^k+\eta(r^{k+1}-\lambda^k)\enspace.
\end{align}
By introducing the variable $w=z-\lambda$, the iteration above can be cast as
\begin{align*} 
    r^{k+1}       &= \prox{_{\rho H}}(\lambda^k-w^k) \\
    \lambda^{k+1} &= \prox{_{\rho F}}(\lambda^k+w^k+\eta(r^{k+1}-\lambda^k)) \\
    w^{k+1}       &= w^k+\lambda^k+\eta(r^{k+1}-\lambda^k)\enspace.
\end{align*}

It is shown in the slides that the $x$-update of the ADMM algorithm~\eqref{eq::admm} can be expressed as $x^{k+1}=\frac{1}{\rho}(r^{k+1}+w^k-\lambda^k)$. Similarly, the $y$-update of the algorithm can be written as $y^{k+1}=\frac{1}{\rho}w^k$. Putting these two together, we have that
\begin{align*}
\tilde{x}^{k+1} &= \eta x^{k+1} + (1-\eta)y^k \\
            &= \eta\frac{1}{\rho}(r^{k+1}+w^k-\lambda^k) + (1-\eta)\frac{1}{\rho}w^k\enspace.
\end{align*}
Then $\lambda^k+\rho\tilde{x}^{k+1}=\lambda^k+w^k+\eta(r^{k+1}-\lambda^k)$, which is the $z$-update given in~\eqref{eq::dr2} after substituting $z=w+\lambda$.
\end{document}